\newcommand{\ee}{\mathrm{e}}
\newcommand{\ii}{\mathrm{i}}
\newcommand{\dd}{\mathrm{d}}
\newcommand{\ind}{\mathbf{1}}
\newcommand{\rL}{\mathrm{L}}
\newcommand{\dP}{\mathbb{P}}
\newcommand{\dR}{\mathbb{R}}
\newcommand{\cE}{\mathcal{E}}
\newcommand{\cW}{\mathcal{W}}
\newcommand{\cZ}{\mathcal{Z}}
\title[At the edge of a one-dimensional jellium]%
{At the edge of a one-dimensional jellium}
\author{Djalil Chafaï}%
\address[DC]{CEREMADE, Université Paris-Dauphine, PSL University, France.}
\email{\url{mailto:djalil(at)chafai.net}} %
\urladdr{\url{http://djalil.chafai.net/}}
\author{David García-Zelada}%
\address[DGZ]{Aix-Marseille University, Institut de Mathématiques de Marseille
  (I2M), France}
\email{\url{mailto:david.garcia-zelada{at}univ-amu.fr}}
\urladdr{\url{https://davidgarciaz.wixsite.com/math}}
\author{Paul Jung}
\address[PJ]{KAIST, Daejeon, South Korea.}
\email{\url{mailto:pauljung(at)kaist.ac.kr}}
\urladdr{\url{http://mathsci.kaist.ac.kr/~pauljung/}}
\date{Autumn 2020, revised Spring 2021, compiled \today.}
\newtheorem{theorem}{Theorem}[section]%
\newtheorem{corollary}[theorem]{Corollary}%
\newtheorem{proposition}[theorem]{Proposition}%
\newtheorem{lemma}[theorem]{Lemma}%
\theoremstyle{definition}
\newtheorem{example}[theorem]{Example}%
\newtheorem{remark}[theorem]{Remark}%
\numberwithin{equation}{section}
\numberwithin{figure}{section}
\keywords{Coulomb gas; Jellium; Edge statistics; One-dimensional model}
\subjclass[2010]{Primary 82B05, % Classical equilibrium statistical mechanics (general)
 60K35, % Interacting random processes; statistical mechanics type models; percolation theory
 60G55; % Point processes (e.g., Poisson, Cox, Hawkes processes)
 Secondary 82D05, %Statistical mechanical studies of gases
 62G30, % Order statistics; empirical distribution functions
 60G70. % Extreme value theory; extremal stochastic processes
}
\begin{document}

\begin{abstract}
  We consider a one-dimensional classical Wigner jellium, not necessarily
  charge neutral, for which the electrons are allowed to exist beyond the
  support of the background charge. The model can be seen as a one-dimensional
  Coulomb gas in which the external field is generated by a smeared background
  on an interval. It is a true one-dimensional Coulomb gas and not a
  one-dimensional log-gas. The system exists if and only if the total
  background charge is greater than the number of electrons minus one. For
  various backgrounds, we show convergence to point processes, at the edge of
  the support of the background. In particular, this provides asymptotic
  analysis of the fluctuations of the right-most particle. Our analysis
  reveals that these fluctuations are not universal, in the sense that
  depending on the background, the tails range anywhere from exponential to
  Gaussian-like behavior, including for instance Tracy\,--\,Widom-like
  behavior. We also obtain a Rényi-type probabilistic representation for the
  order statistics of the particle system beyond the support of the
  background.
\end{abstract}

\maketitle

{\footnotesize\tableofcontents}

\section{Introduction}

Introduced by Wigner in \cite{wigner1934interaction, TF9383400678} for
modeling electrons in metals, the jellium is a Coulomb gas of like-signed
equally charged particles for which an external potential is induced by a
background of smeared charge with opposite sign. The model was inspired by the
Hartree\,--\,Fock model of quantum mechanics. This model and its variants go
by many other names, including the \emph{one-component plasma} or
\emph{uniform electron gas}, see for instance \cite{MR3732693}. Typically, one
imposes the constraint that all charged particles live in some compact region
which is equivalent to imposing an infinite external potential on the
complement of this compact region. Charge neutrality is also usually assumed,
in other words the total charge of the background matches the number of
particles. These restrictions ensure that the system exists and the
mathematical interest typically focuses on the limiting system as the volume
of the compact set (the background) goes to infinity (thermodynamic limit).
The classical one-dimensional jellium has been rigorously studied by Baxter
\cite{baxter_1963} who found the partition function exactly, by Kunz
\cite{KUNZ1974303} who showed the Wigner lattice (crystallization) exists for
all temperatures, by Aizenman and Martin \cite{aizenman1980structure}, and by
Aizenman, Goldstein, and Lebowitz \cite{aizenman2001bounded}, among others. In
the quantum case, Brascamp and Lieb \cite{Brascamp2002} represented the
partition function exactly and showed crystallization when the inverse
temperature parameter $\beta$ is large enough, while the proof of
crystallization for all temperatures is obtained in \cite{jansen2014wigner}
(see also \cite{PhysRevB.94.115417} for thermal effects on crystallization).

In this work, we consider the classical jellium but we do not assume that the
particles are restricted to live on the region where there is background
charge, nor do we assume that the whole system is charge neutral. The system
is well-defined if and only if the total background charge is greater than the
number of particles minus one. We will assume that this condition is
satisfied. The limiting behavior of the particles in the bulk does not change
by allowing the particles to leave the background region, thus we focus our
attention on the edge of the system, near where the background charge ends.
One may then view the system as a jellium on a half-space. More importantly,
similar extremal analysis has been carried out for many similar models such as
the one-dimensional log-gas and two-dimensional unconfined jellium. Thus it is
natural, in the above described setting, to analyze the asymptotic location of
the particles farthest away from the origin.

The backgrounds we consider below have a finite total charge which is allowed
to grow as $n \to \infty$. Edge statistics for a related model one-dimensional
jellium, with infinite background charge, were analyzed in
\cite{PhysRevLett.119.060601, MR3817495}. In the case of a uniform background
with support growing ``fast enough'' we obtain a system behaving similarly to
theirs; however, we will see that in general one may obtain a range of varying
behaviors at the edge.

Let us now describe our model. The Coulomb kernel in dimension $d=1$ is
\begin{equation}\label{eq:couker}
	g(x) =- \frac{|x|}{2},\quad x\in\dR,
\end{equation}
which is the fundamental solution of the Poisson equation $\Delta g=-\delta_0$
in the sense of distributions. More precisely for all smooth and compactly
supported $\varphi: \mathbb R \to \mathbb R$,
\[
  \int_{\mathbb R}
  g(x) \frac{\dd^2}{\dd x^2}\varphi(x) \dd x
  = -\varphi(0).
\]
Let $\mu=\mu_+-\mu_-$ be a possibly signed measure on $\dR$ with finite first absolute
moment, namely $g\in\rL^1(|\mu|)$ where $|\mu|=\mu_++\mu_-$. 
The Coulomb potential generated at the point $x\in\dR$ by $\mu$ is
\begin{equation}\label{eq:U}
	U_\mu(x)=(g*\mu)(x)= -\int\frac{|x-y|}{2}\mu(\dd y),
\end{equation}
which satisfies $\Delta U_\mu= -\mu$ in the sense that for all smooth and
compactly supported $\varphi: \mathbb R \to \mathbb R$,
\begin{equation}\label{eq:inv}
  \int_{\mathbb R} U_\mu(x) \frac{\dd^2}{\dd x^2}\varphi(x) \dd x
  = -\int \varphi \, \dd \mu,
\end{equation}
see for instance \cite{MR3308615,MR0350027}. The Coulomb (self-interaction)
energy of $\mu$ is defined by
\begin{equation}\label{eq:E}
  \cW(\mu)
	=\frac{1}{2}\iint g(x-y)\mu(\dd x)\mu(\dd y)
	=\frac{1}{2}\int U_\mu(x)\mu(\dd x).
\end{equation}
The electric field generated at the point $x\in\mathbb{R}$ by a (possibly
signed) measure $\mu$ on $\mathbb{R}$ is
\begin{equation}\label{eq:electric}
  \cE_\mu(x)=-\frac{\dd}{\dd x}U_\mu(x)
  =\frac{1}{2}\int\mathrm{sign}(x-y)\dd\mu(y).
\end{equation}
For all $n\geq1$, we consider $n$ unit negatively charged particles
(electrons) at positions $x_1,\ldots,x_n$ in $\dR$, lying in a positive
background of total charge $\alpha>0$. The background is smeared according to
a probability measure $\rho$ on $\mathbb{R}$ with finite Coulomb energy
$\cW(\rho)$. We could alternatively suppose that the particles are positively
charged (cations) and the background is negatively charged; this
reversed choice would not affect the analysis of the model. The total energy
of the system is
\[
  -\frac12 \sum_{i<j} |x_i-x_j|%
  -\alpha\sum_{i=1}^nU_{\rho}(x_i)%
    +\alpha^2\cW(\rho).
\]
However, the term $\alpha^2\cW(\rho)$ will not be that important for our
analysis so we set
\begin{equation}\label{eq:Hn}
  H_n(x_1,\ldots,x_n)
  =-\frac12 \sum_{i<j} |x_i-x_j|%
  -\alpha\sum_{i=1}^nU_{\rho}(x_i).
\end{equation}
The one-dimensional Coulomb model comes with remarkable identities such as, for all $x\in\mathbb{R}$,
\begin{equation}\label{eq:PotentialExplicitFormula}
 \int \frac{|x-y|}{2} \dd \rho(y)				
  =  \frac{x}{2} +\int(y-x)\mathbf{1}_{(x,\infty)}\,\dd \rho(y)
	- \int \frac{y}{2}\,\dd \rho(y).
\end{equation}
In the same spirit, \emph{Baxter's combinatorial identity} \cite{baxter_1963}
states that for all $(x_1,\ldots,x_n)\in\dR^n$,
\begin{equation}\label{eq:Baxter}
  -\sum_{i < j} |x_i - x_j|
  =\sum_{i<j}(x_{(j)}-x_{(i)})
  % =\sum_{j=1}^n(j-1)x_{(j)}-\sum_{i=1}^n(n-i)x_{(i)}
  =\sum_{k=1}^n (2k-n-1) x_{(k)},
\end{equation}
where $x_{(n)}\leq\cdots\leq x_{(1)}$ is the reordering of $x_1,\ldots,x_n$;
in particular,
\begin{equation}\label{eq:minmax}
  x_{(n)}=\min_{1\leq i\leq n}x_i
  \quad\text{and}\quad
  x_{(1)}=\max_{1\leq i\leq n}x_i,
\end{equation}
which allows to rewrite the energy as
\begin{equation}\label{eq:CombinatorialIdentity}
  H_n(x_1,\dots,x_n)
  =
  \sum_{k=1}^n 	
  \Bigr[\frac{2k-n-1}{2}x_{(k)}-\alpha_n U_\rho(x_{(k)})\Bigr].
\end{equation}

For simplicity, we assume in the whole text that $\rho$ is absolutely
continuous with respect to Lebesgue measure, with a density function still denoted
$\rho$ by a slight abuse of notation. We say that the system is \emph{neutral}
in charge when $\alpha=n$, and that the background is \emph{uniform} when
$\rho$ is the uniform distribution on an interval $[a,b]$. For all
$\beta>0$, we set
\begin{equation}\label{eq:Z}
  \cZ_n=\int_{\dR^n}\ee^{-\beta H_n(x_1,\ldots,x_n)}
  \dd x_1\cdots\dd x_n\in[0,\infty].
\end{equation}
It can be checked that $\cZ_n<\infty$ if and only if $\alpha>n-1$, see
\cite[Lemma 2.1]{jellium1d-arxiv-v1}.

When $\alpha>n-1$, we can then define the Boltzmann\,--\,Gibbs probability measure
$P_n$ on $\dR^n$ by
\begin{equation}\label{eq:P}
  \dd P_n(x_1,\ldots,x_n)
  =\frac{\ee^{-\beta H_n(x_1,\ldots,x_n)}}{\cZ_n}
  \dd x_1\cdots\dd x_n.
\end{equation}
This is called a \emph{Coulomb gas} with external potential
$V=-\frac{\alpha}{n}U_{\rho}$. We are dealing with electrostatics in the sense
that the charges do not move. In our setting, this external potential arises
from a smeared background with distribution $\rho$, however, one can define
general Coulomb gases for any confining external potential $V$ for which
$\frac{n}{\alpha} \Delta V$ may not necessarily be a probability measure. Let
\begin{equation}\label{eq:Xn}
   \mathbf{X}^{(n)}=(X_1^{(n)},\ldots,X_n^{(n)})\sim P_n.
\end{equation}

\begin{example}[Coulomb gas with quadratic external field]\label{ex:quad}
  Let us consider the example for which $\rho$ is the uniform probability
  measure on an interval $[a,b]$ with $a<b$. Then, for all $x\in\dR$,
  \begin{equation}\label{eq:Unif}
    -U_\rho(x)
    =\frac{1}{2(b-a)}\int_a^b|x-y|\dd y
    =\begin{cases}
      \displaystyle\frac{\left|x-\frac{a+b}{2}\right|}{2} &\text{if $x\not\in[a,b]$}\\[1em]
      \displaystyle\frac{\left(x-\frac{a+b}{2}\right)^2+\frac{(b-a)^2}{4}}{2(b-a)}&\text{if $x\in[a,b]$}
    \end{cases}.
  \end{equation}
  The potential $V=-\frac{\alpha}{n}U_\rho$ then behaves quadratically on
  $[a,b]$ and is affine outside $[a,b]$. Conditioned on all the particles
  lying inside $[a,b]$, it is possible to interpret $P_n$ as a conditioned
  Gaussian law. Using Baxter's identity \eqref{eq:Baxter} together with
  \eqref{eq:Unif}, we have that when $\{x_1,\ldots,x_n\}\subset[a,b]$,
  \begin{equation}\label{eq:BaxterH}
    H_n(x_1,\ldots,x_n)
	  = \sum_{k=1}^n\frac{2k-n-1}{2}x_{(k)}
	    +\frac{\alpha}{2(b-a)}\sum_{i=1}^n\Bigr(x_{(i)}-\frac{a+b}{2}\Bigr)^2+\frac{n\alpha(b-a)}{8}.
  \end{equation}
  This formula shows then that $\mathbf{X}^{(n)}\sim P_n$ is conditionally Gaussian
  in the sense that
  \begin{multline}\label{eq:PnG}
    \mathrm{Law}\Bigr((X_{(n)},\ldots,X_{(1)})\bigm\vert \{X_1,\ldots,X_n\}\subset[a,b]\Bigr)\\
    =\mathrm{Law}\Bigr((Y_n,\ldots,Y_1)\bigm\vert a\leq Y_n\leq\cdots\leq Y_1\leq b\Bigr)
  \end{multline}
  where $Y_1,\ldots,Y_n$ are independent real Gaussian random variables with
  \[
    \mathbb{E}Y_k=\frac{a+b}{2}+\frac{b-a}{2\alpha}\left(n+1-2k \right) 
    \quad\text{and}\quad
    \mathbb{E}((Y_k-\mathbb{E}Y_k)^2)=\frac{b-a}{\alpha \beta}.
  \]
  This was already noted in \cite{baxter_1963}. Now if we consider the limit
  $a\to-\infty$, $b\to \infty$ with $\alpha/(b-a) \to c > 0$, then $P_n$ can
  be interpreted as a Coulomb gas for which the potential is quadratic
  everywhere, namely $V=\frac{c}{2n}\left|\cdot\right|^2$. Since the second
  derivative of $V$ is a constant, this can also be seen as a jellium with a
  background equal to a multiple of Lebesgue measure on the whole of $\dR$.
  Note that this jellium is not neutral, but rather, has an infinite charge
  imbalance for every $n$. Under the scaling $x_i=\sqrt{n}y_i$, see Remark
  \ref{rm:scaling}, this limiting case matches the model studied by
  \cite[Equation (14)]{MR3817495} but note that their $\alpha$ plays the role
  of our $c$ up to a dilation. This Coulomb gas model with quadratic external
  field in one dimension is analogous to the complex Ginibre ensemble which is
  a Coulomb gas in two dimensions.
\end{example}

\begin{remark}[Scale invariance]\label{rm:scaling}
  The model \eqref{eq:P} has a scale invariance which comes from the
  homogeneity of the one-dimensional Coulomb kernel. More precisely, if we
  denote by $\mathrm{dil}_\sigma(\mu)$ the law of the random vector
  $\sigma \mathbf{X}$ when $\mathbf{X}\sim\mu$, then, for all $\sigma>0$, dropping the $n$
  subscript on $P$,
  \[
    \mathrm{dil}_\sigma(P^{\alpha,\beta,\rho})
    =P^{\alpha,\frac{\beta}{\sigma},\mathrm{dil}_\sigma(\rho)}.
  \]
  In other words, if $\mathbf{X}^{(n)}\sim P^{\alpha,\beta,\rho}$ then
  $\sigma \mathbf{X}^{(n)}\sim
  P^{\alpha,\frac{\beta}{\sigma},\mathrm{dil}_\sigma(\rho)}$. This is
  useful in the asymptotic analysis of the model as $n\to\infty$, and reveals
  the special role played by $\alpha$ as a shape parameter. Here the inverse temperature $\beta$ is a
  scale parameter, in contrast with the situation for log-gases.
\end{remark}

\subsection*{Structure of the paper}

\begin{itemize}
  \item Section \ref{sec:results} states our main results: Theorems
    \ref{th:edge pp neutral}, \ref{th:edge:nn:pp}, \ref{th:edge:neutral:pp}
    and Corollary \ref{cor:tail}.
  \item Section \ref{sec:proofs} proves our main results. This is done in two
    steps. In Section \ref{sec:icg}, we first discuss the right-hand sides of
    equations \eqref{eq:unconditioned limit} and \eqref{eq:limit weakly
      confining} contained in our main results. In Section \ref{sec:proofs},
    we complete the proofs by showing that the left-hand sides of these two
    equations converge to the right-hand sides, and we also prove the behavior
    of the single right-most particle as described in Corollary
    \ref{cor:tail}.
  \item Appendices \ref{se:tail} and \ref{se:stodom} give results about tail
    asymptotics and stochastic domination used in the proofs of our main
    results.
  \item Our main results concern point processes with infinitely many
    particles. If one only needs conditional results about the right-most
    particle (or finitely many particles), the proofs can be greatly
    simplified. Appendix \ref{sec:conditional} illustrates this
    simplification.
\end{itemize}

\section{Main results}\label{sec:results}

As a prerequisite to studying the edge asymptotics, one should first verify
that, at the macroscopic level, the limiting equilibrium measure is equal to
$\rho$ as one would naturally expect. For those interested in precise details
of such global asymptotics, we refer to \cite[Theorem
2.2]{jellium1d-arxiv-v1}.

Our main results show that in the limit, as $n\to\infty$, one can obtain an
infinite point process at the edge of the jellium. These point processes can
be interpreted as infinite-volume Gibbs measures when the background region is
expanded in only one direction (leaving the other end of the background
fixed). It will be apparent in our proofs, that the limits are indeed
Gibbsian, in the sense that the limit does not depend on how one takes these
infinite limits, as long as one end is fixed. Our first Theorem \ref{th:edge
  pp neutral} does this in the natural setting of a growing uniform background
$\rho_n$, with an asymptotically neutral system similar to the original model
of \cite{baxter_1963}. Since our results concern the edge of the jellium, it
is natural to require $\rho_n$ to be supported on $(-\infty,0]$. Under the
conditions of the next theorem,
$\frac{1}{n}\sum_{i=1}^n \delta_{n^{-1}X_{i}^{(n)}}$ converges to the uniform
measure on $[-1,0]$.

\begin{theorem}[Point process at the edge, asymptotically neutral regime]\label{th:edge pp neutral}%
  Suppose
  \begin{itemize}
  \item $\beta>0$ is fixed;
  \item $\alpha_n - (n-1) = 2\lambda 
    \in (0,\infty)$;
  \item $\rho_n$ is uniform on $[-\alpha_n,0]$.
  \end{itemize}
  If $\mathbf{X}^{(n)} \sim P_n$ as in 
  \eqref{eq:Xn}, then
  \begin{align}\label{eq:limit weakly confining}
    \lim_{n\to\infty}\mathrm{Law}\big(X^{(n)}_{(k)},\dots,X_{(1)}^{(n)}\big)
    = \lim_{n\to\infty}\mathrm{Law}\big(Y_k,\ldots, Y_1\mid Y_n<\cdots<Y_1\big)
  \end{align}
  where $\{Y_i\}_{i\ge 1}$ are independent random variables such that $Y_i$
  has a density proportional to
  \[
    \exp\Bigr(-\beta \Bigr[ (i -1 + \lambda)x
        +\frac{ x^2}{2}\ind_{(-\infty,0)}\Bigr]\Bigr).
  \]
\end{theorem}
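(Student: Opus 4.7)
The plan is to derive a R\'enyi-type representation of the ordered particles as independent random variables conditioned on being already in order, and then take the limit $n\to\infty$.

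\textbf{Step 1 (R\'enyi representation).} Combining Baxter's identity \eqref{eq:Baxter} with the explicit formula \eqref{eq:Unif} for the potential generated by $\rho_n$ (uniform on $[-\alpha_n,0]$), the energy $H_n$ evaluated at an ordered tuple $y_n\le\cdots\le y_1$ decomposes, modulo an additive constant, as $\sum_{k=1}^n e_k^{(n)}(y_k)$, where substituting $\alpha_n=n-1+2\lambda$ yields
\[
  e_k^{(n)}(x) = (k-1+\lambda)\,x + \tfrac{x^2}{2}\,\ind_{[-\alpha_n,0]}(x)
  \quad\text{on } x\ge -\alpha_n,
\]
and a linear piece of slope $k-n-\lambda$ on $(-\infty,-\alpha_n]$, matching continuously at $x=-\alpha_n$. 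The symmetry of the unordered density $\propto\exp(-\beta H_n)$ then shows that the joint law of the order statistics has density proportional to $\ind_{y_n<\cdots<y_1}\prod_k\exp(-\beta e_k^{(n)}(y_k))$; introducing independent variables $Y_1^{(n)},\ldots,Y_n^{(n)}$ with $Y_k^{(n)}$ of density $\propto\exp(-\beta e_k^{(n)})$, this reads
\[
  \mathrm{Law}(X^{(n)}_{(n)},\ldots,X^{(n)}_{(1)})
  = \mathrm{Law}(Y_n^{(n)},\ldots,Y_1^{(n)}\mid Y_n^{(n)}<\cdots<Y_1^{(n)}).
\]

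\textbf{Step 2 (marginal convergence).} For each fixed $k$, the densities of $Y_k^{(n)}$ and of the theorem's $Y_k$ agree on $[-\alpha_n,\infty)$ up to a common constant, so marginal convergence reduces to showing that the overflow mass $\dP(Y_k^{(n)}<-\alpha_n)$ is negligible. A direct integration of the linear piece of slope $k-n-\lambda$ over $(-\infty,-\alpha_n)$ gives a bound of order $\exp(-c\alpha_n^2)$ for some $c>0$, since the rate $|k-n-\lambda|\sim n$ is multiplied by $|x|\ge\alpha_n\sim n$. By independence, $(Y_1^{(n)},\ldots,Y_k^{(n)})\Rightarrow(Y_1,\ldots,Y_k)$ jointly for every fixed $k$.

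\textbf{Step 3 (passing the conditioning).} The main obstacle is transferring Step~2 through the conditioning event, whose probability tends to zero. The plan is to marginalize the last $n-k$ coordinates: writing $f_i^{(n)}$ for the density of $Y_i^{(n)}$ and $\pi_k^{(n)}(y):=\dP(Y_n^{(n)}<\cdots<Y_{k+1}^{(n)}<y)$, the density of the conditional law on the ordered region $y_k<\cdots<y_1$ is proportional to $\prod_{i=1}^{k}f_i^{(n)}(y_i)\,\pi_k^{(n)}(y_k)$, with an analogous expression for the RHS of \eqref{eq:limit weakly confining} built from the limit variables $Y_i$. It then suffices to show, locally uniformly in $y$, that $\pi_k^{(n)}(y)$ is asymptotic to its analogue for the $Y_i$. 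Here the key ingredient is the stochastic-domination coupling developed in Appendix \ref{se:stodom}: both $Y_i^{(n)}$ and $Y_i$ concentrate in a window of width $O(1/\sqrt{\beta})$ around the widely separated centers $-(i-1+\lambda)$, so the two families can be coupled to be natively in order up to a set whose probability is $o(1)$ relative to either tail probability. Combining this with the marginal convergence from Step~2 (for the factors $f_i^{(n)}$ with $i\le k$) and a standard ratio argument for the normalizations yields \eqref{eq:limit weakly confining}.
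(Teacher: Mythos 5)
Your Steps 1 and 2 match the paper: the Rényi-type representation through Baxter's identity and \eqref{eq:CombinatorialIdentity}, and the marginal convergence $Y_i^{(n)}\Rightarrow Y_i$, are both present in the paper (the latter via monotone/dominated convergence of the potentials $V_i^{(n)}\uparrow V_i$ rather than by estimating the overflow mass directly, but these are equivalent in spirit).

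The gap is in Step 3. You correctly reduce the problem to showing that, after marginalizing out coordinates $k+1,\dots,n$, the factor $\pi_k^{(n)}(y)=\dP(Y_n^{(n)}<\cdots<Y_{k+1}^{(n)}<y)$ behaves, after normalization, like its limit analogue — i.e.\ that $\dP(\widetilde Y_{k+1}^{(n)}\le y):=\pi_k^{(n)}(y)/\dP(Y_n^{(n)}<\cdots<Y_{k+1}^{(n)})$ converges. This is exactly \eqref{eq:kLargestConvergence} in the paper, and it is \emph{the} key step of the whole proof, not a routine consequence. Your justification for it (``the two families can be coupled to be natively in order up to a set whose probability is $o(1)$ relative to either tail probability'') does not constitute a proof: (i) the centers $-(i-1+\lambda)$ of consecutive $Y_i$ differ only by $1$ while the Gaussian width is of order $1/\sqrt{\beta}$, which can be comparable or larger, so there is no ``wide separation'' giving an automatic coupling; (ii) even granting a coupling, both $\dP(Y_n^{(n)}<\cdots<Y_{k+1}^{(n)})$ and its limit analogue tend to $0$, so ``negligible relative to either tail probability'' is a claim about ratios of vanishing quantities that needs a genuine quantitative argument; none of the lemmas in Appendix~\ref{se:stodom} deliver such a coupling. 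What the paper actually does at this point is entirely different and avoids any ratio control: it sandwiches $\widetilde Y_k^{(n)}$ stochastically between $U^{(n)}=\mathrm{Law}(Y_k\mid Y_n\le\cdots\le Y_k)$ from below and $L_M^{(n)}=\mathrm{Law}(Y_k^{(n)}\mid Y_M^{(n)}\le\cdots\le Y_k^{(n)})$ from above, using Lemma~\ref{lem:HereditaryPotential} (for the lower bound, since $V_i^{(n)}-V_i$ is non-decreasing) and Proposition~\ref{prop:PushingToTheRight} (for the upper bound, dropping conditioning constraints pushes to the right). Then $U^{(n)}\to\widetilde Y_k$ by Proposition~\ref{prop:LimitStandardCoulomb}, $L_M^{(n)}\to U^{(M)}$ by Step~2 and finiteness of $M$, and letting $M\to\infty$ closes the sandwich. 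Without this monotonicity device (or a genuine substitute for it), Step~3 of your proposal does not go through.
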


The result should also hold when
$\lim_{n\to\infty}(\alpha_n-(n-1))=2\lambda\in(0,\infty)$, however, for
simplicity, we have only considered the special case
$\alpha_n - (n-1) = 2\lambda$. It seems that the more general case amounts to
proving the right continuity, with respect to $\lambda$, of the limiting
process.

\begin{remark}[Relaxing uniformity of background]\label{rk:uniform}
  Since we are only interested in the edge behavior, one should be able to
  relax the assumption that $\rho_n$ is uniform at the left of $0$ by
  requiring that $\lim_{n \to \infty}\alpha_n\rho_n$
  in the vague sense is the Lebesgue measure restricted to $(-\infty,0]$.
\end{remark}

\begin{remark}[Crystallization]
	The condition on $\alpha_n$ in Theorem \ref{th:edge pp neutral} requires
	$\alpha_{n+1}-\alpha_n=1+o(1)$. Indeed, due to crystallization and
	translation symmetry breaking of the one-dimensional jellium
	\cite{aizenman1980structure, aizenman2010symmetry, jansen2014wigner}, one is
	not allowed to continuously increase the background charge when taking the
	thermodynamic limit, but rather, the increases must be in (roughly) integer
	steps.
\end{remark}

We next consider a (generalized) model similar to that studied in
\cite{PhysRevLett.119.060601, MR3817495,schehr-et-al-1d} for the quadratic
Coulomb gas model of Example \ref{ex:quad}. In our case, we will use a
background with finite total charge, but we allow the total background charge
$\alpha_n$ which grows at a rate faster than $n$. This has a similar affect to
first growing the background to obtain a quadratic external potential, and
then taking the number of particles to infinity. We will see that the special
case where $\rho_n$ is uniform (with $\alpha_n$ growing faster than $n$)
shares the same features as the Coulomb gas with external quadratic potential
\cite[Equation (14)]{MR3817495}; however this sort of behavior is not seen in
general. In order to get such Gaussian behavior it is necessary for the
background charge to extend uniformly beyond the region where the extremal
particles live. This is the special case $\gamma=2$, in our next result. For
general nonneutral systems, one may interpolate between exponential and
Gaussian tails, and even beyond, by varying the decay of the background at the
edge of its support. Under the conditions of the next theorem,
$\frac{1}{n}\sum_{i=1}^n \delta_{n^{-1}X_{i}^{(n)}}$ converges to the uniform
measure on $[-1,0]$ as in Theorem \ref{th:edge pp neutral}.

\begin{theorem}[Point process at the edge, nonneutral regime]
  \label{th:edge:nn:pp}%
  Suppose that 
  \begin{itemize}
  \item $\beta>0$ is fixed;
  \item $\alpha_n$ is such that $\lim_{n\to\infty}(\alpha_n-(n-1))=\infty$;
  \item $\rho_n$ is such that for some fixed $\gamma>1$,
    \[
      \alpha_n\rho_n(x)
      =\ind_{[-\frac{\alpha_n+n}2,0]}(x)\dd x
      +(\gamma-1)x^{\gamma-2}\ind_{\left[0,(\frac{\alpha_n-n}2)^{1/(\gamma-1)}\right]}(x)\dd x.
    \]
  \end{itemize}
  If $\mathbf{X}^{(n)} \sim P_n$ as in 
  \eqref{eq:Xn}, then \eqref{eq:limit weakly confining} holds except
  now $\{Y_i\}_{i\ge 1}$ are independent random variables such that $Y_i$ has
  a density proportional to
  \[
    \exp\Bigr(-\beta\Bigr[\left(i-\frac{1}{2}\right)x+
        \frac{x^2}{2}
        \ind_{(-\infty,0)} +
        \frac{x^{\gamma}}{\gamma}
        \ind_{(0,\infty)}\Bigr]\Bigr)\mathrm .
  \]
\end{theorem}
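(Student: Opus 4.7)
The plan is to follow the strategy used for Theorem \ref{th:edge pp neutral}: combine Baxter's combinatorial identity \eqref{eq:CombinatorialIdentity} with the explicit formula \eqref{eq:PotentialExplicitFormula} to rewrite the joint law of the ordered particles as a product of independent laws conditioned on a total ordering, and then pass to the limit using the analysis of the infinite conditioned Gibbs measure carried out in Section \ref{sec:icg}.

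First, I would compute $-\alpha_n U_{\rho_n}$ explicitly. Setting $R_n:=\bigl(\frac{\alpha_n-n}{2}\bigr)^{1/(\gamma-1)}$ and splitting $\rho_n$ into its uniform left piece and its power-law right piece, identity \eqref{eq:PotentialExplicitFormula} yields, modulo an $x$-independent constant,
\[
-\alpha_n U_{\rho_n}(x)=\frac{nx}{2}+\Psi_n(x),
\]
where $\Psi_n$ coincides with $\Psi(x):=\frac{x^2}{2}\ind_{(-\infty,0)}(x)+\frac{x^\gamma}{\gamma}\ind_{(0,\infty)}(x)$ on the expanding interval $\bigl[-\tfrac{\alpha_n+n}{2},R_n\bigr]$, and is affine outside with continuity and slope-matching at the two endpoints. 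Plugging this into \eqref{eq:CombinatorialIdentity}, the extra linear term $nx/2$ combines with the Baxter coefficient $\tfrac{2k-n-1}{2}$ of $x_{(k)}$ to give exactly $k-\tfrac{1}{2}$, which is the drift appearing in the density of $Y_k$.

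Consequently, the joint law of the ordered particles $(X_{(n)}^{(n)},\ldots,X_{(1)}^{(n)})$ is, up to normalization, proportional to
\[
\prod_{k=1}^n\exp\Bigl(-\beta\Bigl[\bigl(k-\tfrac{1}{2}\bigr)x_{(k)}+\Psi_n(x_{(k)})\Bigr]\Bigr)\,\ind_{\{x_{(n)}<\cdots<x_{(1)}\}}.
\]
Equivalently, it is the law of independent variables $Y_1^{(n)},\ldots,Y_n^{(n)}$, where $Y_i^{(n)}$ has density proportional to $\exp(-\beta[(i-\tfrac{1}{2})x+\Psi_n(x)])$, conditioned on $\{Y_n^{(n)}<\cdots<Y_1^{(n)}\}$. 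Since $\Psi_n\to\Psi$ locally uniformly, each $Y_i^{(n)}$ converges in law to $Y_i$, and the stochastic comparison arguments of Appendix \ref{se:stodom} give uniform-in-$n$ tail bounds.

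The main obstacle is passing to the limit under the total-ordering constraint $\{Y_n^{(n)}<\cdots<Y_1^{(n)}\}$, which couples all $n$ particles. I would combine (a) the construction in Section \ref{sec:icg} of the infinite-volume conditional limit $\lim_{n\to\infty}\mathrm{Law}(Y_k,\ldots,Y_1\mid Y_n<\cdots<Y_1)$ with (b) tightness of the top $k$ particles near $0$, obtained via the tail estimates of Appendix \ref{se:tail}. Tightness forces the edge particles into the region where $\Psi_n\equiv\Psi$ for all $n$ large enough, so that discrepancies between $\Psi_n$ and $\Psi$ in the affine tails contribute negligibly to the ratios defining the conditional law; this identifies the two sides of \eqref{eq:limit weakly confining}.
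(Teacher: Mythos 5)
Your reduction via Baxter's identity \eqref{eq:CombinatorialIdentity} together with \eqref{eq:PotentialExplicitFormula} is exactly the paper's first step: writing $-\alpha_n U_{\rho_n}(x)=\tfrac{n}{2}x+\Psi_n(x)$ with $\Psi_n\equiv\Psi$ on $\bigl[-\tfrac{\alpha_n+n}{2},R_n\bigr]$, obtaining drift $(i-\tfrac12)x$, and then proving $Y_i^{(n)}\to Y_i$ via monotone (increasing) potentials and dominated convergence. This matches the paper and is correct.

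There is a genuine gap, though, in your treatment of the ordering constraint. You argue that tightness of the top $k$ particles near $0$ forces the ``edge particles'' into the region where $\Psi_n\equiv\Psi$. But the law of $\widetilde Y_k^{(n)}:=\mathrm{Law}\bigl(Y_k^{(n)}\mid Y_n^{(n)}\leq\cdots\leq Y_k^{(n)}\bigr)$ depends on the joint density of all of $Y_n^{(n)},\dots,Y_k^{(n)}$, and the leftmost one sits macroscopically near $-n$, far from the origin. Your tightness argument near $0$ (and the tail asymptotics of Appendix~\ref{se:tail}, which concern only the rightmost particle of the limiting gas) say nothing about whether $\widetilde Y_n^{(n)}\geq -\tfrac{\alpha_n+n}{2}$ with probability tending to $1$. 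Without that control, the discrepancy between $\Psi_n$ and its affine extension at the left end cannot be dismissed, because the ordering constraint couples the top particle to the bottom one. The paper closes this hole by a reflection argument: it replaces $Y_i^{(n)}$ by $-n-Y_i^{(n)}$ and reruns the same half-line comparison (Lemma~\ref{lem:HereditaryPotential}, Proposition~\ref{prop:PushingToTheRight}, Proposition~\ref{prop:NoBackgroundInfiniteCoulombGas}) at the left endpoint to get
\[
\mathbb P\Bigl(-\tfrac{\alpha_n+n}{2}\leq \widetilde Y_n^{(n)}\Bigr)\xrightarrow[n\to\infty]{}1.
\]
You should add this symmetric estimate. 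A minor related point: the uniform-in-$n$ bounds that give tightness of $\widetilde Y_k^{(n)}$ on the right come from the stochastic domination machinery of Appendix~\ref{se:stodom}, not from Appendix~\ref{se:tail}.
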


When $\gamma\in(1,2)$ there is an (integrable) singularity in the background
density $\rho_n$ at $0$. This singularity is not important to the edge
behavior, but rather serves only to give the density of $Y_i$ a clean form.
One could smooth out this density at the cost of complicating the density of
$Y_i$.

\begin{remark}[Gaussian and Tracy-Widom like cases]\label{rk:bulk}
	When $\gamma=2$, then Theorem \ref{th:edge:nn:pp} is about the model of
	Example \ref{ex:quad} and represents the particle locations as Gaussian
	variables conditioned on living on a convex simplex. On the other hand, if
	$\gamma=3/2$ in Corollary \ref{cor:tail}, one obtains the tail behavior of
	the Tracy\,--\,Widom distribution $\mathrm{TW}_\beta$ where we recall from
	\cite{MR2813333,dumaz2013right, MR2967965},
	\[
	\dP(M>t)=\ee^{-\frac23 \beta t^{3/2}(1+o_{t\to\infty}(1))}
	\quad\text{for}\quad M\sim\mathrm{TW}_\beta.
	\]
\end{remark}

The final situation we consider is to let $\rho$ be fixed. Since the total
background is $\alpha_n\rho$, this amounts to growing the background
vertically on the support of $\rho$. Under the conditions of the next theorem,
$\frac{1}{n}\sum_{i=1}^n \delta_{X_{i}^{(n)}}$ converges to the measure
$\rho$.

\begin{theorem}[Point process at the edge, neutral regime with fixed background]
	\label{th:edge:neutral:pp}
	Suppose that
	\begin{itemize}
		\item	 $\beta>0$ and fixed;
		\item $\alpha_n$ is such that
		$\lim_{n\to\infty}(\alpha_n-(n-1))=2\lambda\in(0,\infty)$;
		\item $\rho$ is supported inside $(-\infty,0]$ and the support contains
		the origin $0$.
	\end{itemize}
    If $\mathbf{X}^{(n)} \sim P_n$ as in \eqref{eq:Xn}, then for all $m\ge 1$
    the order statistics satisfy
	\begin{align}\label{eq:unconditioned limit}
	\left(X^{(n)}_{(m)},\dots,
	X^{(n)}_{(1)} \right)
	\underset{n\to\infty}{\overset{\mathrm{Law}}{\longrightarrow}}
	\Bigr(\frac{2}{\beta}	\sum_{i=m}^\infty 
	\frac{Z_i}{i(2\lambda-1 + i)},
	\ldots,
	\frac{2}{\beta}	\sum_{i=1}^\infty
	\frac{Z_i}{i(2\lambda-1 + i)}\Bigr),
	\end{align}
	where $\{Z_i\}_{i \geq 1}$ is a sequence of independent exponential random
	variables of unit mean. 
\end{theorem}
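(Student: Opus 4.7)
My plan is to combine Baxter's identity~\eqref{eq:Baxter} with the fact that $U_\rho$ is affine on $(0,\infty)$ (because $\rho$ is supported in $(-\infty,0]$) and the precise calibration $\alpha_n=n-1+2\lambda$ to obtain an exact Rényi-type representation for the order statistics lying to the right of the origin. The stated convergence then reduces to showing that the random count $M_n:=\#\{i:X^{(n)}_i>0\}$ of particles in $(0,\infty)$ tends to $+\infty$ in probability.

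\textbf{Conditional computation.} By~\eqref{eq:PotentialExplicitFormula} and the support assumption, $U_\rho(x)=-x/2+\bar y_\rho/2$ for every $x>0$, where $\bar y_\rho:=\int y\,\dd\rho(y)$. Plugging this into~\eqref{eq:CombinatorialIdentity} and using $\alpha_n-(n-1)=2\lambda$, the contribution to $H_n$ of any order statistic $X_{(k)}$ lying in $(0,\infty)$ is $(k+\lambda-1)X_{(k)}$ up to an $n$-independent additive constant. So on the event $\{X^{(n)}_{(M)}>0>X^{(n)}_{(M+1)}\}$, conditional on the positions of the $n-M$ leftmost particles, the top $M$ particles have joint density proportional to $\exp\bigl(-\beta\sum_{k=1}^M(k+\lambda-1)X_{(k)}\bigr)$ on the simplex $\{X_{(1)}>\cdots>X_{(M)}>0\}$. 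Introducing the gaps $G_j:=X_{(j)}-X_{(j+1)}$ for $j<M$ and setting $G_M:=X_{(M)}$ (a volume-preserving change of variables onto $(0,\infty)^M$), a telescoping sum yields the clean form
\[
\sum_{k=1}^M(k+\lambda-1)X_{(k)}=\sum_{j=1}^M\frac{j(2\lambda-1+j)}{2}\,G_j,
\]
so $(G_1,\ldots,G_M)$ is conditionally a product of independent exponentials with rates $r_j:=\beta j(2\lambda-1+j)/2$. Crucially, this conditional law does not depend on the positions of the leftmost particles, hence it also describes the law given only $\{M_n=M\}$.

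\textbf{Rényi representation and passage to the limit.} Realizing independent exponentials $W_j\sim\mathrm{Exp}(r_j)$ as $W_j=2Z_j/[\beta j(2\lambda-1+j)]$ with $Z_j$ i.i.d.\ unit exponentials independent of $M_n$, the previous step gives, on $\{M_n\ge m\}$, the equality in distribution
\[
\bigl(X^{(n)}_{(k)}\bigr)_{k=1}^m \overset{d}{=}\Bigl(\sum_{j=k}^{M_n}W_j\Bigr)_{k=1}^m.
\]
Since $\sum_j r_j^{-1}<\infty$, the series $S_k:=\sum_{j\ge k}W_j=\frac{2}{\beta}\sum_{j\ge k}Z_j/[j(2\lambda-1+j)]$ converge a.s. Assuming $\dP(M_n\ge L)\to 1$ for every fixed $L$, the sandwich $\sum_{j=k}^L W_j\le\sum_{j=k}^{M_n}W_j\le S_k$ valid on $\{M_n\ge L\}$, together with the monotone convergence $\sum_{j=k}^L W_j\uparrow S_k$ as $L\to\infty$, forces joint convergence in distribution of the right-hand side to $(S_m,\ldots,S_1)$, which is exactly~\eqref{eq:unconditioned limit}.

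\textbf{Main obstacle.} The technical heart is thus to prove $\dP(M_n\ge L)\to 1$ for every fixed $L$. Tightness of the rightmost particle from above is immediate from the bound $X^{(n)}_{(1)}\le S_1$ on $\{M_n\ge 1\}$. The harder task of ruling out that $M_n$ stays bounded I would attack via the stochastic-domination machinery developed in Appendix~\ref{se:stodom}, by a monotone coupling with the uniform-background system of Theorem~\ref{th:edge pp neutral}, where the same Rényi representation holds on an $n$-independent scale and $\dP(M_n\ge L)\to 1$ can be read off from the explicit shift-invariant structure. A complementary route is to express $\dP(M_n=M)$ explicitly by integrating out the top block as above, reducing the question to an estimate on the marginal of the bottom $(n-M)$-particle block that uses bulk convergence to $\rho$ together with the hypothesis that $0$ lies in the support of $\rho$; this last hypothesis is essential, as it guarantees a positive density of bulk particles accumulating near $0$ whose fluctuations leak to the right.
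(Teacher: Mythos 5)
Your conditional Rényi computation is correct and essentially recovers the paper's Proposition~\ref{pr:orderstats}: given $M_n=M$, the top $M$ order statistics have an exact Rényi-type representation with rates $r_j=\tfrac{\beta}{2}j(\alpha_n-n+j)$ (note these depend on $n$ unless $\alpha_n-(n-1)=2\lambda$ exactly, so a small diagonal argument is still needed), and the reduction of the theorem to the claim $\dP(M_n\geq L)\to 1$ for each fixed $L$ is sound. But that claim is in fact the entire content of the theorem, and you have left it open. Neither of your suggested routes works as stated. The stochastic-domination tools in Appendix~\ref{se:stodom} compare laws of individual order statistics under different \emph{conditioning events}; they say nothing about the random integer $M_n$, so there is no monotone coupling of $M_n$ with its counterpart for the uniform background available from that machinery, and in any case $\dP(M_n^{\mathrm{unif}}\geq L)\to 1$ is itself a consequence of Theorem~\ref{th:edge pp neutral}, not an input to it. Your second route is also misleading: bulk convergence $\tfrac1n\sum\delta_{X_i}\to\rho$ says a \emph{vanishing fraction} of particles lies near the edge, and gives no lower bound on the actual number of particles in $(0,\infty)$, which remains $O(1)$ in $n$.

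The paper sidesteps $M_n$ entirely. After the Baxter/ordering representation with independent variables $Y_i^{(n)}$ (density $\propto \ee^{-\beta V_i^{(n)}}$), it sandwiches $\widetilde Y_k^{(n)}$ between a lower bound $L_M^{(n)}$ (conditioning on finitely many particles to the left, yielding a finite Rényi sum) and an upper bound $U^{(n)}$ (conditioning additionally on $0\leq Y_n^{(n)}\leq\cdots\leq Y_k^{(n)}$), using Proposition~\ref{prop:PushingToTheRight}. Crucially, the upper bound is obtained by \emph{forcing} all of $Y_k^{(n)},\dots,Y_n^{(n)}$ to be positive through conditioning — not by counting how many happen to be — so it yields the clean Rényi expression $\tfrac{2}{\beta}\sum_{j=k}^n Z_j/[j(\alpha_n-n+j)]$ with no reference to $M_n$, and the positivity then propagates to the limit via dominated convergence because $V_i^{(n)}(x)\to\infty$ for $x<0$. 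If you want to complete your approach, the missing lemma is precisely that the $L$-th order statistic leaves $(-\infty,0]$ with probability $\to 1$, and I do not see how to prove it without rederiving the paper's two-sided stochastic-domination squeeze on $\widetilde Y_L^{(n)}$ — at which point you might as well present that argument directly.
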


The shape of $\rho$ plays no role in the above theorem as long as the support
contains the point 0.

\begin{remark}[Point process between a left-side and right-side background]	
  In the spirit of the two-dimensional analysis in
  \cite{raphael-david,raphael-david-et-al}, one may also consider the jellium
  where the support of $\rho$ is contained in $(-\infty,a] \cup [b,\infty)$
  for some real numbers $a$ and $b$ in the support of $\rho$ with $a<b$. Set
  $\lambda_n = [\alpha_n - (n-1)]/2$, and choose $\eta \in \mathbb C$ with
  $|\eta|=1$. Suppose that $\alpha_n$ satisfies
  \[
	\frac{\alpha_n}{n}
	\xrightarrow[n \to \infty]{} 1
	\quad\text{ and }\quad
	\exp(2\pi \ii (\lambda_n - \alpha_n \rho(-\infty,a])) = \eta.
  \]
  Then using similar arguments to those in this work, we could show that
  $\{X_1^{(n)},\dots,X_n^{(n)}\}\cap[a,b]$ converges to a point process
  depending only on the parameter $\eta$, as in \cite{aizenman1980structure}.
  Here $\eta$ parametrizes the possible limit point processes.
\end{remark}

We finally consider the single right-most particle of the gas:
\begin{equation}\label{eq:munmn}
  M_n=\max_{1\leq i\leq n}X^{(n)}_i=X^{(n)}_{(1)}.
\end{equation}

\begin{corollary}[Tail asymptotics at the right]\label{cor:tail}\ %
  \begin{itemize}
  \item Under the assumptions of Theorem
    \ref{th:edge pp neutral} or  Theorem \ref{th:edge:neutral:pp}, 
    \begin{equation}\label{eq:exptails nlim}
      \lim_{n\to\infty}\mathbb{P}(M_n>t)= \ee^{-\beta\lambda t(1+c_t)}
      \ \text{for all $t$, with}\ 
      \lim_{t\to\infty}c_t=0.
    \end{equation}
  \item Under the assumptions of Theorem \ref{th:edge:nn:pp}, 
	\begin{equation}\label{eq:twtails}
      \lim_{n\to\infty}\mathbb{P}\left(M_n>t\right)%
      =\ee^{-\frac{\beta}{\gamma} t^{\gamma}(1+c_t)} 
      \ \text{for all $t$, with} \lim_{t\to\infty}c_t=0.
	\end{equation}
  \end{itemize}
\end{corollary}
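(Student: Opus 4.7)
The plan is to leverage the weak convergences of Theorems \ref{th:edge pp neutral}, \ref{th:edge:nn:pp}, and \ref{th:edge:neutral:pp}, specialised to the rightmost coordinate $M_n=X^{(n)}_{(1)}$, and then to extract the right-tail asymptotics of the limit $M_\infty$. In each of the three settings the law of $M_\infty$ is absolutely continuous, so convergence in distribution upgrades to
\[
  \lim_{n\to\infty}\dP(M_n>t)=\dP(M_\infty>t)\quad\text{for every }t\in\dR,
\]
reducing the corollary to showing $\dP(M_\infty>t)$ has the claimed logarithmic asymptotics as $t\to\infty$.

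Under the hypotheses of Theorem \ref{th:edge:neutral:pp}, the limit admits the explicit representation
\[
  M_\infty=\frac{Z_1}{\beta\lambda}+R,\qquad R=\frac{2}{\beta}\sum_{i\ge 2}\frac{Z_i}{i(2\lambda-1+i)},
\]
with $Z_1$ and $R$ independent. The independent exponentials composing $R$ have rates strictly larger than $\beta\lambda$, so $\dE[\ee^{\beta\lambda R}]<\infty$. A standard Laplace-transform argument of the kind collected in Appendix \ref{se:tail} then yields $\dP(M_\infty>t)\sim\dE[\ee^{\beta\lambda R}]\,\ee^{-\beta\lambda t}$ as $t\to\infty$, which is \eqref{eq:exptails nlim} in this case.

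Under the hypotheses of Theorems \ref{th:edge pp neutral} and \ref{th:edge:nn:pp}, the limit $M_\infty$ is described only as the first coordinate of $\lim_n\cL(Y_k,\ldots,Y_1\mid Y_n<\cdots<Y_1)$. The key observation is that this conditioning becomes asymptotically invisible for the right tail. Indeed, for every $i\ge 2$ the unconditioned tail $\dP(Y_i>t)$ is strictly lighter than $\dP(Y_1>t)$: in Theorem \ref{th:edge pp neutral} the right-tail exponential rate is $\beta(i-1+\lambda)>\beta\lambda$, while in Theorem \ref{th:edge:nn:pp} the linear correction $\beta(i-\tfrac12)t$ above the common leading exponent $\beta t^\gamma/\gamma$ grows in $i$. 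Moreover each $Y_i$ with $i\ge 2$ is concentrated around a strictly negative mean, so on $\{Y_1>t\}$ with $t\to\infty$ the event $\{Y_j<Y_1\text{ for all }j\ge 2\}$ has conditional probability tending to $1$. Direct integration of the explicit density of $Y_1$ then gives $\dP(Y_1>t)=\ee^{-\beta\lambda t(1+o(1))}$ and $\dP(Y_1>t)=\ee^{-\beta t^\gamma/\gamma(1+o(1))}$ in Theorems \ref{th:edge pp neutral} and \ref{th:edge:nn:pp} respectively, and by the previous heuristic these are the asymptotics of $\dP(M_\infty>t)$.

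The main technical obstacle is precisely to turn this heuristic into a rigorous bound on $\dP(M_\infty>t)$ when $M_\infty$ is accessible only through a limit of conditional laws, rather than through a closed form as in Theorem \ref{th:edge:neutral:pp}. I would address this by sandwiching $\dP(M_n>t)$, uniformly in $n$, between two quantities equivalent to $\dP(Y_1>t)$ up to subexponential corrections, using the stochastic-domination results of Appendix \ref{se:stodom}. These subexponential factors are then absorbed into the vanishing correction $c_t$ upon taking logarithms.
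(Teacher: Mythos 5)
Your high-level plan is sound, and your treatment of the Theorem \ref{th:edge:neutral:pp} case is correct (and in fact slightly sharper than what the paper states): the decomposition $M_\infty = Z_1/(\beta\lambda) + R$ with $R$ independent and $\dE[\ee^{\beta\lambda R}]<\infty$ (because every summand in $R$ has exponential rate at least $\beta(2\lambda+1)>\beta\lambda$) does give $\dP(M_\infty>t)\sim \dE[\ee^{\beta\lambda R}]\,\ee^{-\beta\lambda t}$, with the explicit prefactor. The paper instead runs all three regimes through a single, uniform pipeline and obtains only the $O(1)$ correction on the logarithmic scale, so for this case your argument is a genuinely different route that buys an explicit constant.

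For Theorems \ref{th:edge pp neutral} and \ref{th:edge:nn:pp}, however, there is a real gap. You write that ``$M_\infty$ is accessible only through a limit of conditional laws, rather than through a closed form,'' and propose to work around this by sandwiching $\dP(M_n>t)$ uniformly in $n$ via the stochastic-domination toolkit. This misses the decisive structural fact that the paper has already established in Proposition \ref{prop:LimitStandardCoulomb}: the limiting law $\Theta_1$ \emph{is} a conditional law, namely $\Theta_1=\mathrm{Law}(Y_1\mid\widetilde Y_2\leq Y_1)$ with $\widetilde Y_2\sim\Theta_2$ independent of $Y_1$. Once this is in hand, Lemma \ref{lem:ConditionedAsymptotics} (a trivial monotone-convergence argument for two independent variables) immediately gives $\dP(M_\infty>t)\sim\dP(Y_1>t)/\dP(\widetilde Y_2\leq Y_1)$, and Laplace's method applied to $\int_t^\infty\ee^{-\beta(\lambda x+V(x))}\,\dd x$ finishes the job uniformly across all three regimes --- which is exactly Proposition \ref{prop:TailAsymptotics}. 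Your heuristic that ``the event $\{Y_j<Y_1\text{ for all }j\ge2\}$ has conditional probability tending to $1$'' is also the wrong formalisation: the relevant conditioning is on the single variable $\widetilde Y_2\leq Y_1$, not on an infinite intersection, and passing from one to the other is precisely what Lemma \ref{lem:ConditionBySteps} and the limit construction handle. Your proposed uniform-in-$n$ sandwiching would, if carried out, essentially reprove the convergence statements of the main theorems, whereas the paper's route makes the corollary an almost free consequence of Proposition \ref{prop:LimitStandardCoulomb}.
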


%Corollary \ref{cor:tail} is proved in Section \ref{sec:pp:proofs}.

\section{Proofs of main results}\label{sec:proofs}
  
\subsection{Infinite Coulomb gases focused at an edge}\label{sec:icg}

As a preliminary to the proofs, in this subsection, we give a description of
the limiting objects in Theorems \ref{th:edge pp neutral},
\ref{th:edge:nn:pp}, \ref{th:edge:neutral:pp} as Coulomb gases with an
infinite number of particles. By \eqref{eq:Baxter} and in the spirit of
\eqref{eq:PnG}, this will be related to the notion of conditioning an infinite
number of particles to have a specific ordering.

Let $\mu$ be a locally finite measure on $\mathbb R$ such that
$\mu(-\infty,t] = \infty$ for every $t \in \mathbb R$. The notion of an
infinite Coulomb gas on $\mathbb R$ associated to $\mu$ at inverse temperature
$\beta$ is as follows. Take $V$ such that $\Delta V = \mu$. Since
$\mu(-\infty,t] = \infty$ for every $t \in \mathbb R$, we know that
$\lim_{x \to - \infty}V(x)/|x| = \infty$. Since $V$ is convex, by possibly
adding a linear term to $V$, we may assume that there exists
$t_0 \in \mathbb R$ such that $V|_{[t_0,\infty)}$ is non-decreasing. Let
$\lambda > 0$ and $\{Y_k\}_{k \geq 1}$ be independent random variables such
that $Y_k$ has a density proportional to
\[
\exp \big(\!-\beta 
\left[ \left(k - 1 +\lambda \right)x + V(x) \right]\big).
\]
Take a random vector $(\xi^{(n)}_1,\dots,\xi^{(n)}_n)$ such that
\begin{align}\label{def:xi}
\mathrm{Law} (\xi^{(n)}_{(n)},\dots,\xi^{(n)}_{(1)}) = \mathrm{Law}
(Y_n,\dots, Y_1\mid Y_n \leq \dots \leq Y_1 ).
\end{align}
We will be interested in the limit, in $n$, of the point processes
\[
\{\xi^{(n)}_1,\dots,\xi^{(n)}_n\}.
\]
We see this limit as an infinite Coulomb gas at inverse temperature $\beta$
since if $x_1 \geq \dots \geq x_n$ then
\[
\sum_{i=1}^n
\left[\left(i - 1 +\lambda
\right)x_i
+ V(x_i) \right]
=-\frac{1}{2}
\sum_{i<j}^n (x_i-x_j)
+ \sum_{i=1}^n
\Bigr(V(x_i) + \frac{2\lambda + n-1}{2}x_i \Bigr).
\]
Hence the total potential energy contains both a two-body Coulomb interaction
portion as well as a confining potential given by
$V(x) + \frac{2\lambda + n-1}{2}x$, whose Laplacian is $\mu$. By this
construction we obtain a family of infinite Coulomb gases indexed by
$\lambda> 0$.

We will be interested here in the following three cases.
\begin{itemize}
\item Case where for some $\gamma > 1$
  \begin{equation}\label{eq:SquaredGammaPotential}
	V(x) = 
	\frac{x^2}{2}\mathbf{1}_{x<0}
	+\frac{x^\gamma}{\gamma}\mathbf{1}_{x\geq0}
  \end{equation}	 
\item Case 
  \begin{equation}\label{eq:SquaredZeroPotential}
	V(x) = \frac{x^2}{2}\mathbf{1}_{x<0}
  \end{equation}	 
\item Infinite half-well case
  \begin{equation}\label{eq:LeftHardEdgePotential}
	V(x) =
	\begin{cases}
      \displaystyle \infty, & \text{if $x<0$}\\
      \displaystyle 0, 
      & \text{if $x\geq0$}
	\end{cases}.
  \end{equation}	 
\end{itemize}

The infinite half-well (or hard wall) case \eqref{eq:LeftHardEdgePotential}
admits the following simple description.

\begin{proposition}[Infinite Coulomb gas in an infinite half-well]
	\label{prop:NoBackgroundInfiniteCoulombGas}
	Let $\lambda > 0$ and $\beta>0$, and take a sequence $\{Y_k\}_{k \geq 1}$
	of independent random variables such that $Y_k$ has a density proportional
	to
	\[
      x \mapsto \exp \big( \!-\beta \left(k - 1 +\lambda \right)x
      \big)\mathbf{1}_{x \geq 0}.
    \]
    If $\{Z_i\}_{i \geq 1}$ is a sequence of independent exponential random
    variables of unit mean, then
    \[\lim_{n\to\infty}\mathrm{Law} (Y_k\mid Y_n \leq \dots \leq Y_k )=
      \mathrm{Law}\left( \frac{2}{\beta} \sum_{i=k}^\infty
        \frac{Z_i}{i(2\lambda-1 + i)}. \right)
    \]
    Moreover, we have for all $m\geq1$,
    \begin{align}\label{eq:UnchargedLimit}
      \lim_{n\to\infty}    \mathrm{Law}(Y_m,\dots, Y_1\mid Y_n \leq Y_{n-1}\leq \dots \leq Y_1 )
      =
      \mathrm{Law}\Bigr(\frac{2}{\beta}	\sum_{i=m}^\infty 
      \frac{Z_i}{i(2\lambda-1 + i)},
      \dots,
      \frac{2}{\beta}	\sum_{i=1}^\infty
      \frac{Z_i}{i(2\lambda-1 + i)}\Bigr).
    \end{align}
\end{proposition}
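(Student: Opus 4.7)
The plan is to give a Rényi-type representation of the conditioned variables in terms of independent exponentials via a spacing change of variables, and then to pass to the limit $n\to\infty$ by almost sure convergence of a series.

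First I would compute, for each fixed $n$, the joint density of $(Y_n,\dots,Y_1)$ under the conditioning $Y_n\leq\dots\leq Y_1$. Since $Y_k$ has density proportional to $\exp(-\beta(k-1+\lambda)x)\mathbf{1}_{x\geq 0}$, the conditional density is proportional to $\prod_{k=1}^n\exp(-\beta(k-1+\lambda)y_k)$ on the simplex $\{0\leq y_n\leq\cdots\leq y_1\}$. Then I would perform the linear change of variables to spacings $S_n=Y_n$ and $S_i=Y_i-Y_{i+1}$ for $1\leq i\leq n-1$, which has unit Jacobian and maps the simplex to $[0,\infty)^n$. Writing $Y_k=\sum_{i=k}^n S_i$ and using Abel summation,
\[
\sum_{k=1}^n(k-1+\lambda)y_k
=\sum_{i=1}^n S_i\sum_{k=1}^i(k-1+\lambda)
=\sum_{i=1}^n\frac{i(2\lambda-1+i)}{2}S_i,
\]
so the conditional joint density factorizes and the spacings $S_1,\dots,S_n$ are independent exponentials with respective rates $\beta i(2\lambda-1+i)/2$. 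Equivalently, one may realize them as $S_i=\frac{2Z_i}{\beta i(2\lambda-1+i)}$ with $\{Z_i\}$ i.i.d.\ unit exponentials, and then, jointly in $k$,
\[
(Y_n,\dots,Y_1)\mid\{Y_n\leq\dots\leq Y_1\}
\stackrel{d}{=}\Bigr(\frac{2}{\beta}\sum_{i=n}^n\frac{Z_i}{i(2\lambda-1+i)},\dots,\frac{2}{\beta}\sum_{i=1}^n\frac{Z_i}{i(2\lambda-1+i)}\Bigr).
\]

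To pass to the limit $n\to\infty$, I would observe that the infinite series $\sum_{i=1}^\infty \frac{Z_i}{i(2\lambda-1+i)}$ converges almost surely, for instance by monotone convergence together with $\sum_i 1/(i(2\lambda-1+i))<\infty$ since $\lambda>0$. Fixing $m$, the partial sums $\sum_{i=k}^n\frac{Z_i}{i(2\lambda-1+i)}$ converge almost surely to $\sum_{i=k}^\infty\frac{Z_i}{i(2\lambda-1+i)}$ for each $k=1,\dots,m$, hence jointly in distribution, which gives both displayed statements of the proposition (the one-dimensional marginal identity being just the marginal of the joint one).

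There is no real obstacle here beyond bookkeeping: the whole argument is driven by the Abel summation identity $\sum_{k=1}^i(k-1+\lambda)=i(2\lambda-1+i)/2$, which turns the conditioned exponential density into a product density in the spacings, and by the summability of $\sum 1/i^2$. The only point deserving a brief justification is the almost sure convergence of the series, which I would handle by comparing the truncated sums to their expectations and invoking monotone convergence to exchange limit and expectation when needed.
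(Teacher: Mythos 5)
Your main computation -- passing to the spacings $S_n=Y_n$, $S_i=Y_i-Y_{i+1}$, using Abel summation $\sum_{j=1}^{i}(j-1+\lambda)=i(2\lambda-1+i)/2$, recognizing the spacings as independent exponentials, and then letting $n\to\infty$ via almost sure convergence of the series -- is exactly the argument the paper uses, and it correctly establishes the joint-law display \eqref{eq:UnchargedLimit}.

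There is, however, one step you should not treat as "bookkeeping": you deduce the one-dimensional display by saying it is "just the marginal of the joint one," but the first display conditions only on $Y_n\leq\dots\leq Y_k$, not on the full chain $Y_n\leq\dots\leq Y_1$. These are genuinely different conditionings for $k\geq 2$: under the partial conditioning the spacing $z_i$ (for $k\leq i\leq n$) has rate $\beta\sum_{j=k}^{i}(j-1+\lambda)$, which differs from $\tfrac{\beta}{2}i(2\lambda-1+i)$ by the constant $\tfrac{\beta}{2}(k-1)(2\lambda+k-2)$, so the resulting law of $Y_k=\sum_{i\geq k}z_i$ does not coincide with the $k$-th coordinate of the joint limit. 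Your proposal therefore does not actually prove the first display as written; to do so you would need to repeat the spacing change of variables starting the Abel sum at $j=k$ rather than $j=1$, which yields a different closed form. (For what it is worth, the paper's own proof makes the same slip: it writes down the $k=1$ rate $\tfrac{\beta}{2}i(2\lambda+i-1)$ for general $k$. Only the existence of the limit -- which your tightness/monotone-convergence argument does give -- and the $k=1$ formula are actually used downstream, so the damage is limited, but you should be explicit that the two conditionings are not the same.)
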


%Proposition \ref{prop:NoBackgroundInfiniteCoulombGas} is proved in Section
%\ref{sec:icg:proofs}.

As $\lambda\to0$, we lose a particle to infinity, and we recover then the
point process for $\lambda=1$.

\begin{proof}%[Proof of Proposition \ref{prop:NoBackgroundInfiniteCoulombGas}]
	Let $k$ be fixed. Let $ T_k^{(n)},\dots, T_n^{(n)}$ be random variables such
	that
	\[
	\mathrm{Law}(T_n^{(n)},\dots,T_k^{(n)})
	=\mathrm{Law}(Y_n,\dots,Y_k \mid Y_n \leq \dots \leq Y_k).
	\]
	Then $(T_n^{(n)},\dots, T_k^{(n)})$ have a joint density proportional to
	\[
	(x_n,\dots,x_k) 
	\mapsto \exp \Bigr[-\beta 
	\sum_{j=k}^n
	\left(j - 1 +\lambda
	\right)x_j
	\Bigr]\mathbf{1}_{0 \leq x_n  \leq \dots \leq x_k}.
	\]
	We can perform the change of variables
	\begin{equation}\label{eq:ChangeOfVariables}
	z_i = x_i - x_{i+1}\text{ if } 
	i \in \{k,\dots, n-1\}
	\quad \text{ and } \quad 
	z_n = x_n 
	\end{equation}
	or equivalently $x_j = \sum_{i=j}^n z_i$ for any $i \in \{k,\dots, n\}$ to
	obtain a density proportional to
	\[
	(z_n,\dots,z_k) \mapsto \prod_{i=k}^n
	\exp\left[-\frac{\beta}{2}(2\lambda + i-1)i\,  z_i\right]\mathbf{1}_{z_i \geq 0}.
	\]
	This tells us that, if $\{Z_i \}_{i \geq k} $ is a sequence of independent
	exponential random variables of unit mean, we have that the law of
	$\bigr(T_{n}^{(n)},\dots,T_k^{(n)}\bigr)$ is the same as the law of
	\begin{equation}\label{eq:UnchargedFiniteVersion}
	\frac{2}{\beta} \Bigr(
	\frac{Z_n}{n(2\lambda + n-1)},
	\sum_{i=n-1}^n
	\frac{Z_i}{i(2\lambda + i-1)},
	\dots,
	\sum_{i=k}^n
	\frac{Z_i}{i(2\lambda + i-1)}\Bigr).
	\end{equation}
	In particular, 
	\[
	\mathrm{Law}(T_k^{(n)})
	= \mathrm{Law} 
	\Bigr( \frac{2}{\beta}
	\sum_{i=k}^n
	\frac{Z_i}{i(2\lambda + i-1)}\Bigr)
	\]
	so that, by taking $n \to \infty$, we
	obtain that
	\[
	T_k^{(n)}
	\xrightarrow[n \to \infty]{}
	\frac{2}{\beta} \sum_{i=k}^\infty
	\frac{W_i}{i(2\lambda + i-1)}.
	\]
	Finally, if $k=1$,
	\[
	\mathrm{Law}(\xi_m^{(n)},\dots, \xi_1^{(n)})
	=
	\mathrm{Law}(T_m^{(n)}, \dots, T_1^{(n)})
	\]
	and we obtain \eqref{eq:UnchargedLimit} by taking $n \to \infty$ in
	\eqref{eq:UnchargedFiniteVersion}.
\end{proof}

\begin{remark}[Gumbel limit]
	If $\frac{2}{\beta}=\chi=2\lambda-1$, let us show that the final coordinate
	of the right-hand side of \eqref{eq:UnchargedLimit} has a Gumbel limit as
	$\chi\to\infty$. Indeed, this coordinate can be written
	\[
	M_\chi= \sum_{i=1}^\infty \frac{\chi} {k(\chi + k)} Z_k
	\]
	where $\{Z_k\}_{k \geq 1}$ are independent exponential random variables of
	unit mean. It turns out that
	\[
	M_\chi - \mathbb E[M_\chi] %
	\xrightarrow[\chi \to \infty] %
	{\mathrm{law}} G - \gamma
	\quad\text{where}\quad
	\gamma = \lim_{n \to \infty}\Bigr(\sum_{k=1}^n\frac{1}{k}-\log n\Bigr)
	\]
	is the Euler\,--\,Mascheroni constant and $G$ is a standard Gumbel random
	variable. Indeed, we could use characteristic functions or Fourier transform
	and start by noting that for any $u \in \mathbb R$,
	\[
	\mathbb E \left[ \ee^{-\ii u M_\chi}\right] %
	= \prod_{k=1}^\infty%
	\Bigr( \displaystyle 1+ \frac{\ii u \chi}{k(\chi+k)} \Bigr)^{-1}.
	\]
	On the other hand,
	\[
	\prod_{k=1}^\infty
	\exp\Bigr(\frac{\ii u \chi}{k(\chi+k)}\Bigr)\Bigr(1+\frac{\ii u \chi}{k(\chi+k)}\Bigr)^{-1}
	\xrightarrow[\chi \to \infty]{}
	\prod_{k=1}^\infty
	\ee^{\ii\frac{u}{k}}\Bigr(1+\frac{\ii u}{k}\Bigr)
	=
	\ii u
	\ee^{\gamma \ii u}\Gamma(\ii u)
	=
	\ee^{\gamma \ii u}\Gamma(\ii u + 1)
	\]
	where we have used Weierstrass's formula
	$\Gamma(z) =z^{-1}\ee^{-\gamma z} \prod_{k=1}^\infty\ee^{k^{-1}z}(1+k^{-1}z)^{-1}$
	and the identity $z\Gamma(z)=\Gamma(z+1)$. It remains to note that if $Z$ is
	an exponential random variable of unit mean so that $G = - \log(Z)$ is a
	standard Gumbel random variable, then for all $u\in\mathbb{R}$,
	\[
	\mathbb E\left[\ee^{-\ii u G} \right] =\int_0^\infty
	\ee^{-r}\ee^{\ii u \log r} \dd r \int_0^\infty
	\ee^{-r}r^{\ii u} \dd r =\Gamma(\ii u + 1 ).
	\]
\end{remark}

We now state the existence of more general infinite Coulomb gases focused at an edge in the
following proposition where, for simplicity, we take $t_0 = 0$.

\begin{proposition}[Infinite Coulomb gases at the edge]\label{prop:LimitStandardCoulomb}
	Let $V:\mathbb R \to \mathbb R$ be a continuous function such that
	\[
	\lim_{x \to - \infty} \frac{V(x)}{|x|} = \infty
	\quad\text{and such that}\quad
	V|_{[0,\infty)} \text{ is non-decreasing}.
	\]
	For $\lambda,\beta > 0$ consider $\{Y_k\}_{k \geq 1}$ and
    $(\xi_1^{(n)},\dots,\xi_n^{(n)})$ as in \eqref{def:xi}. Then, for any
    $k \geq 1$, the \nolinebreak limit
	\[
	\Theta_k = 
	\lim_{n \to \infty}
	\mathrm{Law} \left(Y_k \mid Y_n \leq \dots \leq Y_k\right)
	\text{ exists}.
	\]
	Moreover, if we take $\widetilde Y_k \sim \Theta_k$ independent of
	$(Y_1,\dots,Y_{k-1})$ we have that
	\begin{equation}\label{eq:CoulombLimitDefinition}
	\lim_{n \to \infty}
	\mathrm {Law}
	(\xi_{(k)}^{(n)},\dots,\xi_{(1)}^{(n)})
	=\mathrm {Law}
	(\widetilde Y_k, Y_{k-1},\ldots, Y_1\mid\widetilde Y_k\leq Y_{k-1}\leq \dots \leq  Y_1)
	\end{equation}
\end{proposition}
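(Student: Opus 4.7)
The plan is to express the marginal density of $\xi_{(k)}^{(n)}$ explicitly, establish its convergence via stochastic monotonicity combined with tightness at $+\infty$, and lift to the joint statement through a conditional-independence factorisation. First I would integrate out $Y_{k+1},\dots,Y_n$ from the joint density of $(Y_n,\dots,Y_1)$ conditioned on $\{Y_n\leq\cdots\leq Y_1\}$, which shows that $(\xi_{(k)}^{(n)},\dots,\xi_{(1)}^{(n)})$ has a density proportional to
\[
  f_k(x_k)\,q_n(x_k)\prod_{j=1}^{k-1}f_j(x_j)\,\ind_{x_k\leq\cdots\leq x_1},\qquad q_n(y):=\dP(Y_n\leq\cdots\leq Y_{k+1}\leq y),
\]
where $f_j$ denotes the density of $Y_j$. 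In particular $\mathrm{Law}(Y_k\mid Y_n\leq\cdots\leq Y_k)$ has density proportional to $f_kq_n$, so the whole question reduces to identifying the $n\to\infty$ limit of this one-dimensional marginal.

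For the existence of $\Theta_k$ I would combine stochastic monotonicity and tightness. With $A_n:=\{Y_n\leq\cdots\leq Y_k\}$, the joint density of $(Y_n,\dots,Y_k)\mid A_n$ is a product of one-variable densities times the indicator of the monotone simplex, which is closed under coordinate-wise min and max; this verifies the FKG lattice condition, so
\[
  \dP(Y_k>M\mid A_{n+1})=\frac{\dE[\ind_{Y_k>M}F_{n+1}(Y_n)\mid A_n]}{\dE[F_{n+1}(Y_n)\mid A_n]}\geq\dP(Y_k>M\mid A_n),
\]
i.e.\ $\mathrm{Law}(Y_k\mid A_n)$ is stochastically non-decreasing in $n$. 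For tightness at $+\infty$, using $q_n\leq q_n(\infty)$ and monotonicity of $q_n$, for any reference $y_0$ one has
\[
  \dP(Y_k>M\mid A_n)\leq\frac{q_n(\infty)}{q_n(y_0)}\cdot\frac{\int_M^\infty f_k}{\int_{y_0}^\infty f_k}.
\]
The second factor vanishes as $M\to\infty$ by the at-least-exponential right-tail decay of $f_k$ (since $\lambda>0$ and $V$ is non-decreasing on $[0,\infty)$). The first factor equals $1/\dP(Y_{k+1}\leq y_0\mid Y_n\leq\cdots\leq Y_{k+1})$, and a uniform-in-$n$ lower bound on this conditional probability is obtained by stochastic comparison with the half-well model of Proposition \ref{prop:NoBackgroundInfiniteCoulombGas} via the tools of Appendix \ref{se:stodom}: the general chain puts additional mass to the left of $0$ compared with the half-well chain (which is supported on $[0,\infty)$), and for the half-well chain the conditional law of $Y_{k+1}$ converges to the explicit Rényi sum, which is tight. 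Monotonicity plus this tightness bound yields the limit $\Theta_k$.

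For the joint convergence I would use the conditional-independence structure: given $Y_k=y$, the events $\{Y_n\leq\cdots\leq Y_{k+1}\leq y\}$ and $\{y\leq Y_{k-1}\leq\cdots\leq Y_1\}$ involve disjoint sets of variables and are independent. Writing the joint density of $(\xi_{(k)}^{(n)},\dots,\xi_{(1)}^{(n)})$ as
\[
  \frac{f_k(x_k)\,q_n(x_k)}{N_n}\cdot\frac{N_n\prod_{j<k}f_j(x_j)\,\ind_{x_k\leq\cdots\leq x_1}}{\int f_kq_n\,G},\qquad G(x_k):=\dP(x_k\leq Y_{k-1}\leq\cdots\leq Y_1),
\]
with $N_n:=\int f_kq_n$, the first factor converges to the density $\widetilde f_k$ of $\Theta_k$, while $G$ is bounded, continuous and $n$-independent. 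Dominated convergence then yields the claimed limit, namely the density of $(\widetilde Y_k,Y_{k-1},\dots,Y_1)$ conditioned on $\widetilde Y_k\leq Y_{k-1}\leq\cdots\leq Y_1$ with $\widetilde Y_k\sim\Theta_k$ independent of $(Y_1,\dots,Y_{k-1})$. The main obstacle is the tightness step: monotonicity alone would allow $Y_k\mid A_n$ to escape to $+\infty$, and the stochastic-domination comparison with the explicit half-well limit of Proposition \ref{prop:NoBackgroundInfiniteCoulombGas} is the essential input ruling this out.
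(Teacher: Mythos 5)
Your overall architecture matches the paper's: isolate the marginal $\widetilde Y_k^{(n)}$ (law of $Y_k$ given $Y_n\le\dots\le Y_k$), show it converges via stochastic monotonicity plus tightness by comparison with the half-well model, then lift to the joint statement via the factorisation that Lemma~\ref{lem:ConditionBySteps} encodes. Your monotonicity argument (FKG on the monotone simplex, tilting by the increasing function $F_{n+1}(Y_n)$) is a legitimate alternative to the paper's Proposition~\ref{prop:PushingToTheRight}; the lattice condition you invoke is correct since the product density times the indicator of the ordered simplex is log-supermodular. Your tightness bound $\dP(Y_k>M\mid A_n)\le \frac{q_n(\infty)}{q_n(y_0)}\cdot\frac{\int_M^\infty f_k}{\int_{y_0}^\infty f_k}$ is correct, and the uniform-in-$n$ lower bound on $q_n(y_0)/q_n(\infty)$ is exactly Proposition~\ref{prop:Hereditary} applied directly to $\mathrm{Law}(Y_i)$ versus the half-well law $\mathrm{Law}(B_i)$ (Radon--Nikodym derivative $\propto e^{\beta V}\ind_{x\ge 0}$, non-decreasing by the hypothesis on $V$), combined with Proposition~\ref{prop:NoBackgroundInfiniteCoulombGas}; the paper routes through an intermediate hard-wall comparison via Proposition~\ref{prop:PushingToTheRight}, but your shortcut is sound.

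The genuine gap is in the joint-convergence step. You assert that the first factor $f_k(x_k)q_n(x_k)/N_n$, i.e.\ the density of $\widetilde Y_k^{(n)}$, ``converges to the density $\widetilde f_k$ of $\Theta_k$'', and then apply dominated convergence. But this pointwise (or a.e.) density convergence does not follow from the monotonicity-plus-tightness argument you gave, which only yields weak convergence of $\widetilde Y_k^{(n)}$ to $\Theta_k$. And it is not a vacuous worry: both the numerator $q_n$ and the normaliser $N_n=\dP(Y_n\le\dots\le Y_k)$ tend to zero as $n\to\infty$ (already in the half-well case $N_n\sim 2^n/(n+1)!$), so the ratio is a genuine $0/0$ whose convergence would require a separate argument (e.g.\ the explicit Rényi-type representation, or a Scheffé argument built on structure you have not established). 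You also implicitly assume $\Theta_k$ has a density, which is not established by the weak-convergence argument. The paper avoids all of this by arguing at the level of laws: it factorises via Lemma~\ref{lem:ConditionBySteps} into an independent triple $(\widetilde Y_k^{(n)},Y_{k-1},\dots,Y_1)$ conditioned on an ordering, notes that the conditioning set's boundary has zero mass under the limit (since the $Y_j$ and $\widetilde Y_k$ have no atoms), and applies a Portmanteau (``limsup over closed sets'') argument to conclude. If you replace your density computation with that measure-level argument, the proof closes; as written, the density-convergence step is a gap.
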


%Proposition \ref{prop:LimitStandardCoulomb} is proved in Section
%\ref{sec:icg:proofs}.

\begin{proof}%[Proof of Proposition \ref{prop:LimitStandardCoulomb}]
	Let $k$ be fixed and $\{\widetilde Y_k^{(n)}\}_{k \geq n}$ be a sequence of
	random variables such that
	\[
	\mathrm{Law}(\widetilde Y_k^{(n)})
	=\mathrm{Law} \left(Y_k\mid Y_n \leq \dots \leq Y_k
	\right).
	\]
	By Proposition \ref{prop:PushingToTheRight}, the sequence
	$\{\widetilde Y_{k}^{(n)}\}_{n \geq 1}$ is stochastically increasing, namely
	\begin{align}\label{eq:si}
	\mathbb P(\widetilde Y_k^{(n+1)}
	\leq t) 
	\leq \mathbb P(\widetilde Y_k^{(n)} 
	\leq t)
	\end{align}
	for every $t \in \mathbb R$ and $n \geq 1$. It is enough to control
	$\widetilde Y_k^{(n)}$ from above to know that it converges in law. We begin
	by noticing that, again by Proposition \ref{prop:PushingToTheRight},
	\[
	\mathbb P (Y_k \leq t\mid 0\leq Y_n \leq \dots \leq Y_k ) %
	\leq \mathbb P (Y_k \leq t\mid Y_n \leq \dots \leq Y_k ).
	\]
	But $\mathrm {Law} (Y_k\mid 0\leq Y_n \leq \dots \leq Y_k )$ can be
	equivalently described by taking independent random variables
	$W_n, \dots, W_k$ such that $W_i$ has a density proportional to
	\[
	x \mapsto \exp \big(\!-\beta 
	\left[ \left(i - 1 +\lambda
	\big)x
	+ V(x)
	\right]\right)\mathbf{1}_{x \geq 0}
	\]
	and noticing that
	\[
	\mathrm {Law}
	(Y_k\mid 0\leq Y_n \leq \dots \leq Y_k )
	= \mathrm{Law}(W_k\mid W_n \leq \dots \leq W_k ).
	\]
	Let $\{B_i\}_{i \geq k}$ be a sequence of independent random variables such
	that the random variable $B_i$ has a density proportional to
	\[
	x \mapsto \exp \left(-\beta \big(i - 1 +\lambda \right)x
	\big)\mathbf{1}_{x \geq 0}.
	\]
	By Proposition 
	\ref{prop:Hereditary}, we know that
	\[
	\mathbb P(B_k \leq t\mid B_n \leq \dots \leq B_k )
	\leq 
	\mathbb P(W_k \leq t\mid W_n \leq \dots \leq W_k ).
	\] 
	Let $\varepsilon > 0$. By Proposition
	\ref{prop:NoBackgroundInfiniteCoulombGas},
	$\mathrm{Law}(B_k\mid B_n \leq \dots \leq B_k )$ converges as $n \to \infty$
	so that there exists $T>0$ such that
	\[
	1-\varepsilon < 
	\mathbb P(B_k \leq T\mid B_n \leq \dots \leq B_k )
	\]
	for every $n$. This implies that the sequence
	$\{\widetilde Y_k^{(n)}\}_{n \geq k}$ is tight and, since \eqref{eq:si}
	shows it is stochastically increasing, it has a limit that we shall call
	$\widetilde Y_k$.
	
    The right-hand side of \eqref{eq:CoulombLimitDefinition} is well-defined
    since we may find open intervals $I_1,\dots,I_{k}$ such that
	\[
	\mathbb P \left(
	\widetilde Y_k 
	\leq Y_{k-1} \leq \dots \leq
	Y_{1}  \right) 
	\geq 
	\mathbb P\left(
	\widetilde Y_k \in I_k,
	Y_{k-1}  \in I_{k-1}, \dots,
	Y_1 \in I_1
	\right)
	\, > \, 0.
	\]
	To prove the second statement of the proposition we notice that, by Lemma
	\ref{lem:ConditionBySteps},
	\[
	\mathrm{Law}
	\left(\xi_k^{(n)},\dots,
	\xi_1^{(n)}\right)
	=  \mathrm{Law}
	\left(\widetilde Y_k^{(n)},
	Y_{k-1},\dots,Y_{1} \bigm\vert
	\widetilde Y_k^{(n)} \leq 
	Y_{k-1} \leq \dots
	\leq Y_1
	\right)
	\] 
	where we are supposing $\widetilde Y_k^{(n)}$ is independent of
	$(Y_1,\dots,Y_{k-1})$. In particular, we have that
	\[
	\lim_{n \to \infty}
	\mathbb P
	\left(
	\widetilde Y_k^{(n)}
	\leq Y_{k-1} \leq \dots 
	\leq Y_1\right)
	=
	\mathbb P
	\left(
	\widetilde Y_k
	\leq Y_{k-1} \leq \dots \leq Y_{1}
	\right)
	\]
	since $Y_1,\dots,Y_{k-1}$ do not have atoms and thus
	\[
	\mathbb P
	\left(Y_i= Y_j\right) = 
	0 \quad \text{ for }
	i \neq j \quad \text{ and }
	\quad
	\mathbb P\left(Y_i=
	\widetilde Y_k\right) = 0
	\quad \text{ for } i \in
	\{1,\dots, k-1\}.
	\]
	Finally, for any closed set $A \subset \mathbb R^k$,
	\begin{align*}
	\limsup_{n \to \infty}
	& \
	\mathbb P\Big((Y_1,\dots,Y_{k-1},
	\widetilde Y_k^{(n)}) \in  A \quad
	\text{ and } \quad 
	\widetilde Y_k^{(n)} \leq
	Y_{k-1} \leq \dots
	\leq Y_{1} 
	\Big)								\\
	&\leq
	\mathbb P\Big((Y_1,\dots,Y_{k-1},
	\widetilde Y_k) \in  A \quad
	\text{ and } \quad 
	\widetilde Y_k \leq
	Y_{k-1} \leq \dots
	\leq Y_{1} 
	\Big),
	\end{align*}
	so that
	\begin{align*}
	\limsup_{n \to \infty}
	& \
	\frac{\mathbb P\Big((Y_1,\dots,Y_{k-1},
		\widetilde Y_k^{(n)}) \in  A \quad
		\text{ and } \quad 
		\widetilde Y_k^{(n)} \leq
		Y_{k-1} \leq \dots
		\leq Y_{1} 
		\Big)}
	{\mathbb P
		\left(\widetilde Y_k^{(n)}
		\leq Y_{k-1} \leq \dots \leq Y_{1}
		\right)}								\\
	&\leq
	\frac{\mathbb P\Big((Y_1,\dots,Y_{k-1},
		\widetilde Y_k) \in  A \quad
		\text{ and } \quad 
		\widetilde Y_k \leq 
		Y_{k-1} \leq \dots
		\leq Y_{1}
		\Big)}{\mathbb P
		\left(
		\widetilde Y_k \leq 
		Y_{k-1} \leq \dots \leq Y_{1} 
		\right)}.
	\end{align*}
	This is one of the equivalent conditions of weak convergence, thus the proof
	is complete.
\end{proof}

\begin{remark}[Limiting point process]
	The convergence \eqref{eq:CoulombLimitDefinition} defines a probability
	measure on the space of sequences of real numbers
	$\mathbb R^{\mathbb Z_{\geq 1}}$. More precisely, if we let, for any
	$k \geq 1$, $\pi_k: \mathbb R^{\mathbb Z_{\geq 1}} \to \mathbb R^k$ be the
	projection onto the first $k$ coordinates and
	$Y_1,\dots,Y_{k-1},\widetilde Y_k$ be as in Proposition
	\ref{prop:LimitStandardCoulomb}, then the infinite Coulomb gas is the
	probability measure $\Gamma$ on $\mathbb R^{\mathbb Z_{\geq 1}}$ such that,
	for every $k \geq 1$,
	\[
	{(\pi_k)}_*\Gamma %
	=\mathrm{Law}(\widetilde Y_k,Y_{k-1},\dots,Y_1\mid\widetilde Y_k \leq Y_{k-1} \leq \dots \leq Y_1),
	\]
	where ${(\pi_k)}_* \Gamma $ denotes the image measure of $\Gamma$ by
	$\pi_k$. In particular, for any continuous $f:\mathbb R \to \mathbb R$ whose
	support is bounded from below, we have
	that
	\[
	\sum_{i=1}^n
	f(\xi_i^{(n)}) 
	\xrightarrow[n \to \infty]{\mathrm{Law}}
	\sum_{i=1}^\infty f(W_i)
	\quad\text{where}\quad
	(W_i)_{i \geq 1} \sim \Gamma.
	\]
	This statement is in fact equivalent to \eqref{eq:CoulombLimitDefinition} for every $k \geq 1$.
\end{remark}

\subsection{Proofs of Theorems \ref{th:edge:neutral:pp},
  \ref{th:edge pp neutral}, \ref{th:edge:nn:pp}, Corollary
  \ref{cor:tail}}\label{sec:pp:proofs}

The following proofs share the same first steps which we explain now. We
define
\begin{equation}\label{eq:ithPotential}
  V^{(n)}_i(x) = \frac{2i-1-n}{2}x-\alpha_nU_{\rho_n}(x).
\end{equation}
For every $n\geq 1$, we consider $n$ independent random variables
$Y_1^{(n)},\dots,Y_n^{(n)}$ such that $Y_i^{(n)}$ has a density proportional
to $x \in \mathbb R \mapsto \ee^{-\beta V^{(n)}_i (x)}$. By
\eqref{eq:CombinatorialIdentity}, the Coulomb gas $X_1^{(n)},\dots,X_n^{(n)}$
satisfies
\[
  \mathrm{Law}
  \left(X_{(n)}^{(n)},
    \dots,X_{(1)}^{(n)} \right)
  =
  \mathrm{Law}\left(Y_n^{(n)},\dots,Y_1^{(n)}
    \bigm\vert
    Y_n^{(n)} \leq \dots \leq Y_1^{(n)}\right).
\]
We fix $k \geq 1$ and we study
$\left(X_{(k)}^{(n)}, \dots,X_{(1)}^{(n)} \right)$. By Lemma
\ref{lem:ConditionBySteps}, there is a simple description of this vector by
considering a random variable $\widetilde Y_k^{(n)}$ independent of
$Y_1^{(n)},\dots,Y_{k-1}^{(n)}$ and such that
\[
  \mathrm{Law}(\widetilde Y_k^{(n)})
  = \mathrm{Law}
  \left(Y^{(n)}_k \bigm\vert Y^{(n)}_n 
    \leq  \dots \leq Y^{(n)}_k\right).
\]
Namely, we have that
\[
  \mathrm{Law}
  \left(X_{(k)}^{(n)},
    \dots,X_{(1)}^{(n)} \right)
  =
  \mathrm{Law}\left(\widetilde Y_k^{(n)},
    Y_{k-1}^{(n)},\dots,Y_1^{(n)}
    \bigm\vert
    \widetilde Y_k^{(n)} \leq 
    Y_{k-1}^{(n)} \leq \dots \leq Y_1^{(n)}\right).
\]
This suggests dividing the argument into two parts: first, understand the
limiting law of the random vector
$ \left(\widetilde Y_k^{(n)}, Y_{k-1}^{(n)},\dots,Y_1^{(n)} \right)$ and next,
perform the conditioning. Since
$\widetilde Y_k^{(n)}, Y_{k-1}^{(n)},\dots,Y_1^{(n)}$ are independent, we need
only understand the limit of each one separately. Our goal is to obtain the
infinite Coulomb gas discussed in Section \ref{sec:icg}. We first take the
limits
\begin{equation}\label{eq:NotconditionedConvergence}
  Y_i^{(n)} \xrightarrow[n \to \infty]{\mathrm{law}} Y_i,
\end{equation}
where the law of $Y_i$ is proportional to
\[
  \exp\big(\!-\beta\left[ \left(
      \lambda + i - 1
    \right)x + V(x)\right] \big)
  \dd x,
\]
and $V$ is one of the potentials in \eqref{eq:SquaredGammaPotential},
\eqref{eq:SquaredZeroPotential} or \eqref{eq:LeftHardEdgePotential}. Next, we
take the limits
\begin{equation}\label{eq:kLargestConvergence}
  \widetilde Y_k^{(n)} \xrightarrow[n \to \infty]
  {\mathrm{Law}} \widetilde Y_k,
\end{equation}
where the law of $\widetilde Y_k$ is the limit as $m\to\infty$ of
\[
  \mathrm{Law}\left( Y_k\mid Y_m \leq \dots \leq Y_k \right)
\]
which exists due to Proposition \ref{prop:LimitStandardCoulomb} and
\ref{prop:NoBackgroundInfiniteCoulombGas}. After proving
\eqref{eq:NotconditionedConvergence} and \eqref{eq:kLargestConvergence}, a
standard conditioning argument such as the one at the end of the proof of
Proposition \ref{prop:LimitStandardCoulomb}, completes the proof. We proceed
now to prove \eqref{eq:NotconditionedConvergence} and
\eqref{eq:kLargestConvergence} for each of the cases. We begin with Theorem
\ref{th:edge:neutral:pp}.
 
\begin{proof}[Proof of Theorem \ref{th:edge:neutral:pp}]
  We begin by noticing that, by \eqref{eq:PotentialExplicitFormula},
  \begin{align*}
    V^{(n)}_i(x)
    &=
      \frac{2i-1-n}{2}x-\alpha_nU_\rho(x)\\
    &=
      \left(i-1 + \frac{\alpha_n - (n-1) }{2}\right)x
      + \alpha_n\int_{(x,0]} (s-x)\dd\rho(s) 
      - \frac{\alpha_n}{2}\int_{\mathbb R}s\dd\rho(s).
  \end{align*}
  Since $\frac{\alpha_n}{2} \int_{\mathbb R} s \, \dd \rho(s)$ is independent
  of $x$ we may redefine
  \[
    V^{(n)}_i(x)
    =
    \left(i-1 + \frac{\alpha_n - (n-1) }{2}\right)x
    + \alpha_n 
	\int_{(x,0]} (s-x)
    \dd \rho(s).
  \]
  In this case $V$ will be the one in \eqref{eq:LeftHardEdgePotential} so that
  $Y_i$ is proportional to
  \[
    \exp\big(\!-\beta (
    i - 1 + \lambda
    )x\big)
    \mathbf{1}_{[0,\infty)}(x)\dd x,
  \]
  To get \eqref{eq:NotconditionedConvergence} it is enough to notice that
  \[
    \lim_{n \to \infty} V_i^{(n)}(x)
    =\begin{cases}
       \big( i-1 + \lambda \big)x
       &\text{ if $x \geq 0$}	
       \infty\\
       &\text{ if $x < 0$}
     \end{cases}.
   \]
   so that, since for any $\varepsilon \in (0, \lambda)$ and $N > i$
   \[
     V_i^{(n)}(x) > 
     \big(i-1 + \lambda - \varepsilon\big)x
     + N
     \Bigr(\int_{(x,0]} s\dd \mu(s) - P(x)\Bigr)
   \]
   for $n$ large enough, dominated convergence gives
   \eqref{eq:NotconditionedConvergence}. What is left to prove is
   \eqref{eq:kLargestConvergence}. By Proposition
   \eqref{prop:NoBackgroundInfiniteCoulombGas}, the limit of
   $\mathrm{Law}\left( Y_k\mid Y_m \leq \dots \leq Y_k \right)$ as
   $m \to \infty$ is
   \[
	   \frac{2}{\beta} \sum_{j=k}^\infty\frac{Z_j}{j(2\lambda - 1 + j)},
   \]
   where $\{Z_j\}_{j \geq k}$ is a sequence of independent exponential random
   variables of unit mean. Now, to study the sequence
   $\{\widetilde Y_k^{(n)}\}_{ n \geq k}$, we shall consider two new random
   variables. The first one will be an upper bound for $\widetilde Y_k^{(n)}$
   and will be denoted by $U^{(n)}$. It is defined so that it satisfies
   \[
     \mathrm{Law}(U^{(n)})
     = \mathrm{Law}
     \left(Y^{(n)}_k \bigm\vert
         0 \leq Y^{(n)}_n 
         \leq  \dots \leq Y^{(n)}_k\right).
   \]
   For the second one we fix an integer $M>k$. The random variable will be a
   lower bound of $\widetilde Y_k^{(n)}$ and will be denoted by $L^{(n)}_M$.
   We ask $L^{(n)}_M$ to satisfy
   \[
     \mathrm{Law}(L_M^{(n)}) = \mathrm{Law} \left(Y^{(n)}_k \bigm\vert
         Y^{(n)}_M \leq \dots \leq Y^{(n)}_k\right).
   \]
   By Proposition \ref{prop:PushingToTheRight}, we have the stochastic
   domination
   \[
     \mathbb P \left( U^{(n)}
       \leq t \right)
     \leq \mathbb P \left( \widetilde Y^{(n)}_k
       \leq t \right)
     \leq \mathbb P \left(  L_M^{(n)}
       \leq t \right).
   \]
   To understand $U^{(n)}$ we notice that
   the density of
   \[
     \mathrm{Law}
     \left(Y^{(n)}_n, \dots, Y^{(n)}_k \bigm\vert
         0 \leq Y^{(n)}_n 
         \leq  \dots \leq Y^{(n)}_k\right)
   \]
   is proportional to
   \[
     (y_n,\dots,y_k) \mapsto \prod_{i=k}^n
     \exp\left[-\frac{\beta}{2}(\alpha_n - (n-1) + 2(i - 1))y_i\right] 
     \mathbf{1}_{0 \leq y_{n} \leq \dots \leq y_k}.
   \]
   As in the proof of Proposition \ref{prop:NoBackgroundInfiniteCoulombGas},
   under the change of variables \eqref{eq:ChangeOfVariables}, we can see that
   \[
     \mathrm{Law}(U^{(n)})
     = \mathrm{Law}\Bigr(
     \frac{2}{\beta}
     \sum_{j=k}^n\frac{Z_j}{j(\alpha_n-(n-1) + j-1)} \Bigr).
   \]
   Since $\alpha_n - (n-1) \xrightarrow[n\to \infty]{} 2\lambda > 0$, we can
   bound each term in the sum by $\frac{Z_j}{j(2\lambda + j-1)}$ for $n$ large
   enough. Since $ \sum_{j\ge 1}\frac{Z_j}{j(\lambda+j-1)}$ converges almost
   surely and since $ \frac{Z_j}{j(\alpha_n - (n-1) + j-1)}$ converges to
   $\frac{Z_j}{j(2\lambda + j-1)}$, we can use the dominated convergence
   theorem to obtain that
   \[
	   \frac{2}{\beta} \sum_{j=k}^n\frac{Z_j}{j(\alpha_n - (n-1) + j-1)} \xrightarrow[n \to \infty]{\mathrm{a.s.}} \frac{2}{\beta}
     \sum_{j=k}^\infty\frac{Z_j}{j(2\lambda + j-1)}. 
   \] 
   To understand $L_M^{(n)}$ we take the limit of $( Y^{(n)}_M,\dots,Y^{(n)}_k) $
   which is $(Y_M,\dots,Y_k)$ by \eqref{eq:NotconditionedConvergence}. Since
   the random variables involved have no atoms, taking the limit commutes with
   conditioning so that
   \[
     L_M^{(n)}
     \xrightarrow[n \to \infty]{\mathrm{Law}}
     L_{M},
   \]
   where
   \[
     \mathrm{Law}(L_{M})=\mathrm{Law}\left( Y_k \mid Y_M \leq \dots \leq Y_k
     \right).
   \]
   As before, we can see that
   \[
     \mathrm{Law}(L_{M}) = 
     \mathrm{Law}\Bigr(\sum_{j=k}^M\frac{Z_j}{j(\chi +(j-1))}\Bigr),
   \]
   where $\{Z_j\}_{j \geq k} $ is a sequence of independent exponential random
   variables of unit mean. By taking upper and lower limits in
   \[
     \mathbb P( U^{(n)} \leq t)
     \leq \mathbb P( \widetilde Y^{(n)}_k \leq t)
     \leq \mathbb P( L_M^{(n)} \leq t),
   \]
   we get that
   \begin{multline*}
	   \mathbb P\Bigr(\frac{2}{\beta} \sum_{j=k}^\infty\frac{W_j}{j(\chi + (j-1))} \leq t \Bigr)					\\
       \leq 
       \liminf_{n \to \infty} 
       \mathbb P(\widetilde Y^{(n)}_k\leq t) 
       \leq \limsup_{n \to \infty}\mathbb P(\widetilde Y^{(n)}_k\leq t)\\
       \leq
	   \mathbb P\Bigr(\frac{2}{\beta} \sum_{j=k}^M \frac{W_j}{j(\chi + (j-1))} \leq t\Bigr).
   \end{multline*}
   By taking $M \to \infty$ we obtain \eqref{eq:kLargestConvergence}.
 \end{proof}
 
\begin{proof}[Proof of Theorem \ref{th:edge pp neutral}]
  We have $\alpha_n\rho_n(x)=\ind_{[-\alpha_n,0]}(x)$. Since
  $\alpha_n \to \infty$, every subsequence has an increasing (sub)subsequence
  so that we may assume without loss of generality that $\alpha_n$ is
  increasing. Let us define
  \[
    V(x) = \frac{x^2}{2}\mathbf{1}_{x\leq0}.
  \]
  We have $\Delta V(x) = \mathbf{1}_{(-\infty,0]}(x)$ for $x\neq0$. To
  understand $V_i^{(n)}$, which is
  $\frac{2i-n-1}{2}x- \alpha_n U_{\rho_n}(x)$ by
  \eqref{eq:ithPotential}, we notice that
  $-\alpha_n U_{\rho_n}(x) = V(x) + \frac{\alpha_n}{2} x$ if
  $x\in\left[-\alpha_n, \infty \right)$, and for $x \in (-\infty,-\alpha_n]$,
  it is affine such that it is differentiable everywhere. One may see this by
  direct calculation using \eqref{eq:PotentialExplicitFormula}, or by noticing
  that the Laplacians of both $-\alpha_n U_{\rho_n}$ and $V$ are the same
  inside $[-\alpha_n,\infty)$, and then by calculating the derivative of
  $-\alpha_n U_{\rho_n}$ at $-\alpha_n$. Therefore, setting
  \[
    V_i(x)=
    \Bigr(i-1+\frac{\alpha_n-(n-1)}{2}\Bigr)x + V(x)
    =
    \left(i-1   + \lambda \right)x + V(x)
  \]
  we have that
  \[
    V^{(n)}_i(x)
    =V_i(x)
    \quad  \text{ if } x
    \in 
    \left[-\alpha_n, \infty
    \right)
  \]
  and that it is extended in an affine and differentiable way at the left of
  this interval. We also have that $Y_i$ has a density proportional to
  $\ee^{-\beta V_i(x)}$. By dominated convergence, since
  $\{V_i^{(n)}\}_{n \geq i}$ is an increasing sequence of functions that
  converges to $V_i$, we obtain \eqref{eq:NotconditionedConvergence}.

  We now prove \eqref{eq:kLargestConvergence}. For this, we shall consider two
  new random variables. The first one will bound $\widetilde Y_k^{(n)}$ from
  above and will be denoted by $U^{(n)}$. It is defined so that it satisfies
  \[
    \mathrm{Law}(U^{(n)}) =
    \mathrm{Law} \left(Y_k \mid Y_n \leq \dots \leq Y_k \right).
  \]
  For the second one we fix an integer $M>k$. The random variable will be a
  lower bound of $\widetilde Y_k^{(n)}$ and will be denoted by $L^{(n)}_M$. We
  ask $L^{(n)}_M$ to satisfy
  \[
    \mathrm{Law}(L_M^{(n)})
    = \mathrm{Law}(Y^{(n)}_k \mid Y^{(n)}_M \leq  \dots \leq Y^{(n)}_k).
  \]
  By Lemma \ref{lem:HereditaryPotential}, we have the stochastic domination
  \[
    \mathbb P( U^{(n)} \leq t)
    \leq \mathbb P(\widetilde Y^{(n)}_k\leq t)
  \]
  while, by Proposition \ref{prop:PushingToTheRight},
  \[
    \mathbb P(\widetilde Y^{(n)}_k\leq t)
    \leq \mathbb P(L_M^{(n)}\leq t).
  \]
  By definition,
  \[
    U^{(n)}\xrightarrow[n \to \infty]{\mathrm{law}} 
    \widetilde Y_k,
  \]
  so that we also have
  \[
    \lim_{n \to \infty} \mathbb P(U^{(n)} \leq t)
    = \mathbb P(\widetilde Y_k \leq t)
  \]
  when $t$ is a point of continuity of $\mathbb P(\widetilde Y_k \leq t)$.
  Since, we also have
  \[
    L_M^{(n)} \xrightarrow[n \to \infty]{\mathrm{law}}
    U^{(M)},
  \]
  we obtain that
  \[
    \mathbb P(\widetilde Y_k\leq t)
    \leq \liminf_{n \to \infty}\mathbb P(\widetilde Y^{(n)}_k\leq t)
    \leq 
    \limsup_{n \to \infty}\mathbb P(\widetilde Y^{(n)}_k\leq t)
    \leq
    \mathbb P(U^{(M)}\leq t).
  \]
  Taking $M \to \infty$ proves \eqref{eq:kLargestConvergence} thus completing
  the proof.
\end{proof}

\begin{proof}[Proof of Theorem \ref{th:edge:nn:pp}]
  We have that
  \[
    \alpha_n\rho_n(x)
	=\ind_{[-\frac{\alpha_n+n}2,0]}(x)
	+(\gamma-1)x^{\gamma-2}\ind_{\left(0,(\frac{\alpha_n-n}2)^{1/(\gamma-1)}\right]}(x).
  \]
  Since $\alpha_n \to \infty$, every subsequence has an increasing
  (sub)subsequence so that we may assume $\alpha_n$ is increasing. Let us define
  \[
    V(x) = \frac{x^2}{2}\mathbf{1}_{x\leq0}
    +\frac{x^\gamma}{\gamma}\mathbf{1}_{x>0}
  \]
  whose Laplacian is
  \[
    \Delta V(x) = \mathbf{1}_{(-\infty,0]}(x)
    +(\gamma-1)x^{\gamma-2}\mathbf{1}_{(0,\infty)}.
  \]
  Notice that
  \[
    -\alpha_n U_{\rho_n}(x)
    = V(x) + \frac{n}{2} x \quad  \text{ if } x
    \in 
    \Bigr[-\frac{\alpha_n+n}2,\Bigr(\frac{\alpha_n-n}2\Bigr)^{1/(\gamma-1)}\Bigr]
  \]
  and that it is affine at the left and at the right of this interval with the
  derivatives at the endpoints coinciding. One may obtain this by a direct
  calculation using \eqref{eq:PotentialExplicitFormula}, or by noticing that
  the Laplacians of both functions, $-\alpha_n U_{\rho_n}$ and $V$, are the
  same inside that interval and the derivatives of $-\alpha_n U_{\rho_n}$
  should be $-\alpha_n/2$ at the left and $\alpha_n/2$ at the right of the
  interval. If we define
  \[
    V_i(x)=
    \left(i   - \frac{1}{2} \right)x + V(x)
  \]
  we have that
  \[
    V^{(n)}_i(x)
    =V_i(x)
    \quad  \text{ if } x
    \in 
    \Bigr[-\frac{\alpha_n+n}2,\Bigr(\frac{\alpha_n-n}2\Bigr)^{1/(\gamma-1)}\Bigr]
  \]
  and that it is extended in an affine and differentiable way outside this
  interval. In this case we have the law of $Y_i$ is proportional to
  \[
    x \mapsto\ee^{-\beta V_i(x)}.
  \]
  By dominated convergence, since $\{V_i^{(n)}\}_{n \geq i}$ is an increasing
  sequence of functions that converges to $V_i$, we obtain
  \eqref{eq:NotconditionedConvergence}, namely
  \[
    Y_{i}^{(n)} \xrightarrow[n \to \infty]
    {\mathrm{law}} Y_i.
  \]
  Now fix $k \geq 1$. To prove \eqref{eq:kLargestConvergence} we consider a
  random vector
  $\left( \widetilde Y^{(n)}_k,\dots,\widetilde Y^{(n)}_n \right)$ such that
  \[
    \mathrm {Law}  \left(\widetilde Y^{(n)}_k,
      \dots,\widetilde Y^{(n)}_n \right)
    = \mathrm{Law} \left(Y_{k}^{(n)}, \dots, Y_{n}^{(n)} \mid Y_n^{(n)}
      \leq \dots \leq Y_k^{(n)} \right).
  \]
  Consider the event
  \[
    A_n= 
    \Bigr\{
      -\frac{\alpha_n+n}2 \leq \widetilde Y_{n}^{(n)} 
      \quad \text{ and } \quad 
      \widetilde Y_{k}^{(n)}  \leq 
      \Bigr(\frac{\alpha_n-n}2 \Bigr)^{1/(\gamma-1)}
    \Bigr\}.
  \]
  We will see that
  \begin{equation}\label{eq:ProbGoes1}
    \mathbb P(A_n) \xrightarrow[n \to \infty]{} 1,
  \end{equation}
  so that, given a bounded continuous function $f: \mathbb R \to \mathbb R$
  and by writing
  \[
    \mathbb E[f(\widetilde Y_k^{(n)})] 
    =\mathbb E[f(\widetilde Y_k^{(n)})\mid A_n]\ \mathbb P(A_n)
    \ +\ 
    \mathbb E[f(\widetilde Y_k^{(n)})\mid A_n^c]\ \mathbb P(A_n^c),
  \]  
  we see that 
  \begin{equation}\label{eq:LimitByConditioning}
    \lim_{n \to \infty}
    \mathbb E[f 
      \bigr(\widetilde Y_k^{(n)}\bigr)]  
    =\lim_{n \to \infty}
    \mathbb E[f\bigr(\widetilde Y_k^{(n)}\bigr)\bigm\vert A_n] 
  \end{equation}
  assuming one of these limits exists. To see that the right-side limit
  exists, we repeat the same argument except with the sequence
  $\{Y_i\}_{i \geq k}$ instead of $\{Y_i^{(n)}\}_{n \geq i \geq k}$. If we
  show the analogue of \eqref{eq:ProbGoes1}, for $\{Y_i\}_{i \geq k}$, then we
  also have the analogue of \eqref{eq:LimitByConditioning}. But note that for
  $\{Y_i\}_{i \geq k}$, the left-hand side of \eqref{eq:LimitByConditioning}
  is known to exist by Proposition \ref{prop:LimitStandardCoulomb} and the
  right-hand side coincides with the one associated to
  $\{Y_i^{(n)}\}_{n \geq i \geq k}$ since $V_i = V_i^{(n)}$ in
  $\left[-(\alpha_n+n)/2,((\alpha_n-n)/2)^{1/(\gamma-1)} \right]$. Thus we
  need only show \eqref{eq:ProbGoes1} with respect to both
  $\{Y_i^{(n)}\}_{n \geq i \geq k}$ and $\{Y_i\}_{i \geq k}$.
 
  By Proposition \ref{prop:PushingToTheRight}, we know that
  \[
    \mathbb P(Y_k^{(n)}\leq t \bigm\vert 0 \leq Y_n^{(n)} \leq \dots \leq
      Y_k^{(n)}) 
      \leq \mathbb P(Y_k^{(n)} \leq t\bigm\vert Y_n^{(n)}
        \leq \dots \leq Y_k^{(n)}).
  \]
  Let $\{B_i\}_{i \geq k}$ be positive
  independent random variables such that $B_i$ has
  a density proportional to
  \[
    x \in (0,\infty) \mapsto \exp \Bigr[ -\beta 
      \Bigr( i-\frac{1}{2} \Bigr) x \Bigr].
  \]
  Since $V_i^{(n)}(x) - (i-1/2)x$ is increasing for 
  $x > 0$, by Lemma
  \ref{lem:HereditaryPotential} we have that
  \[
    \mathbb P(B_k\leq t \bigm\vert 
        B_n \leq \dots \leq B_k)
    \leq 
    \mathbb P(Y_k^{(n)} \leq t\bigm\vert
        0 \leq 
        Y_n^{(n)} \leq \dots \leq Y_k^{(n)}).
  \]
  But, by Proposition \ref{prop:NoBackgroundInfiniteCoulombGas}, we know that
  $\inf_{n \geq k} \mathbb P\left(B_k\leq t \mid B_n \leq \dots \leq
    B_k\right) $ converges to $1$ as $t \to \infty$, so that the same happens
  to $\inf_{n \geq k} \mathbb P\left(\widetilde Y_k^{(n)} \leq t\right) $ and,
  in particular,
  \[
    \mathbb P \Bigr(\widetilde Y_{k}^{(n)}  \leq 
    \Bigr(\frac{\alpha_n-n}2 \Bigr)^{1/(\gamma-1)}
    \Bigr) \xrightarrow[n \to \infty]{} 1.
  \]
  To prove that
  \begin{equation}\label{eq:LeftEdge}
    \mathbb P\Bigr(
      -\frac{\alpha_n+n}2 \leq \widetilde Y_{n}^{(n)} 
    \Bigr) \xrightarrow[ n \to \infty]{} 1
  \end{equation}
  we consider the system seen from $-n$. More precisely, we use that
  \[
    \mathbb P(Y_n^{(n)}\geq t \bigm\vert
        Y_n^{(n)} \leq \dots \leq Y_k^{(n)}
        \leq -n)
    \leq 
    \mathbb P
    (Y_n^{(n)} \geq t\bigm\vert
        Y_n^{(n)} \leq \dots \leq Y_k^{(n)})
  \]
  and we focus on the left-hand side of this inequality that can be rewritten
  \[
    \mathbb P(-n-Y_n^{(n)}\leq -n-t \bigm\vert
        0\leq -n-Y_k^{(n)} \leq \dots \leq -n- Y_n^{(n)}).
  \]
  The same argument as before applied to $-n-Y_k^{(n)},\dots,-n-Y_n^{(n)}$
  instead of $Y_n^{(n)},\dots,Y_k^{(n)}$ gives
  \[
    \mathbb P\Bigr(-n-Y_n^{(n)}\leq 
      \frac{\alpha_n - n}{2} \Bigm\vert
        0\leq -n-Y_k^{(n)} \leq \dots \leq -n- Y_n^{(n)}
      \Bigr) \xrightarrow[n \to \infty]{} 1
  \]
  which implies \eqref{eq:LeftEdge}. The same arguments work for the sequence
  $\{Y_i\}_{i \geq k}$ instead of $\{Y_i^{(n)}\}_{n \geq i \geq k}$ which
  completes the proof.
\end{proof}

\begin{proof}[Proof of Corollary \ref{cor:tail}]
  By Theorems \ref{th:edge:neutral:pp}, \ref{th:edge pp neutral} and
  \ref{th:edge:nn:pp} we know that $M_n$ converges in law. The limiting law is
  $\Theta_1$ from Proposition \ref{prop:LimitStandardCoulomb} by choosing $V$
  properly, and $\Theta_1$ can be also described as
  \[
    \Theta_1 = \mathrm{Law}
    \bigr(Y_1 \bigm\vert \widetilde Y_2 \leq Y_1\bigr)
  \]
  where $\widetilde Y_2$ and $Y_1$ are independent random variables such that
  $\widetilde Y_2 \sim \Theta_2$ and $Y_1$ has a density proportional to
  $\rho(x) = e^{-\beta (\lambda x + V)}$. In particular, $\Theta_1$ has a
  density proportional to
  \[
    x \in \mathbb R \mapsto \rho(x) \mathbb P
    \bigr(\widetilde Y_2 \leq x \bigr).
  \]
  In fact, by repeating the same kind of procedure with $\Theta_k$ instead of
  $\Theta_1$ and using induction on $k$ it can be proved that the regularity
  of the density coincides with the regularity of $V$ but this will not be
  needed here. Since $\Theta_1$ has no atoms we have that
  \[
    \lim_{n\to\infty}\mathbb{P}(M_n>t)
    = \Theta_1(t,\infty)
  \]
  and we may conclude by Proposition \ref{prop:TailAsymptotics}.
\end{proof}

\appendix

\section{Tail asymptotics}\label{se:tail}

In this appendix, we prove a proposition regarding the tail asymptotics of the
right-most particles of our infinite Coulomb gases at the edge. We first need
a short lemma to prove the proposition.

\begin{lemma}\label{lem:ConditionedAsymptotics}
  If $Y_1$ and $Y_2$ are independent random variables, then
  \[
    \mathbb P(Y_1 \geq t, Y_2 \leq Y_1) %
    \underset{t\to\infty}{\sim}
    \mathbb P(Y_1\geq t).
  \]
\end{lemma}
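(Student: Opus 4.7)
The plan is to condition on $Y_1$ and exploit the independence of $Y_1$ and $Y_2$. Writing $F_{Y_2}(y) = \mathbb P(Y_2 \leq y)$, independence of $Y_1$ and $Y_2$ gives
\[
  \mathbb P(Y_1 \geq t,\, Y_2 \leq Y_1)
  = \int_{[t,\infty)} F_{Y_2}(y)\, \mathbb P_{Y_1}(\dd y),
\]
while trivially $\mathbb P(Y_1 \geq t) = \int_{[t,\infty)} \mathbb P_{Y_1}(\dd y)$. Therefore, for all $t$ with $\mathbb P(Y_1 \geq t) > 0$, the ratio of interest can be rewritten as
\[
  \frac{\mathbb P(Y_1 \geq t,\, Y_2 \leq Y_1)}{\mathbb P(Y_1 \geq t)}
  = \mathbb E\bigl[F_{Y_2}(Y_1)\bigm\vert Y_1 \geq t\bigr],
\]
which is a weighted average of the values $F_{Y_2}(y)$ taken over $y \geq t$.

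Next, since $Y_2$ is an honest real-valued random variable, $F_{Y_2}(y) \to 1$ as $y \to \infty$. Hence, given $\varepsilon > 0$, we may choose $T$ with $F_{Y_2}(y) \geq 1 - \varepsilon$ for all $y \geq T$. For $t \geq T$, the weighted average above is then squeezed between $1-\varepsilon$ and $1$, which yields the desired asymptotic equivalence upon letting $\varepsilon \downarrow 0$.

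There is no genuine obstacle: the lemma is essentially a one-line conditioning argument combined with the observation that $F_{Y_2}(y) \to 1$. The only mildly delicate point is making sense of the ratio when $\mathbb P(Y_1 \geq t) = 0$, but this is irrelevant in the applications (such as the one to $Y_1$ having density proportional to $\rho(x)\mathbb P(\widetilde Y_2 \leq x)$ in the proof of Corollary \ref{cor:tail}), where $Y_1$ has a density supported unboundedly to the right and hence $\mathbb P(Y_1 \geq t) > 0$ for every $t$.
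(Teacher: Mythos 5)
Your proof is correct, and it takes a genuinely different route from the paper's. You integrate out $Y_1$ and write the ratio as the conditional expectation $\mathbb E[F_{Y_2}(Y_1)\mid Y_1\geq t]$, then squeeze it between $1-\varepsilon$ and $1$ using $F_{Y_2}(y)\to 1$. The paper instead integrates out $Y_2$, writing $\mathbb P(Y_1\geq t,\,Y_2\leq Y_1)=\int \mathbb P(Y_1\geq t\vee y)\,\dd\mathbb P_{Y_2}(y)$, observes that $\mathbb P(Y_1\geq t\vee y)/\mathbb P(Y_1\geq t)$ is non-decreasing in $t$ with limit $1$, and invokes the monotone convergence theorem to pass the limit through the integral. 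Both decompositions hinge on independence and give one-line proofs; yours is slightly more elementary in that it replaces the monotone convergence step by a direct epsilon squeeze, while the paper's has the minor virtue of not needing to single out a threshold $T$. Your remark about $\mathbb P(Y_1\geq t)>0$ is the right caveat — the same tacit assumption underlies the paper's argument, and is automatically satisfied in the applications.
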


\begin{proof}
  By independence of $Y_1$ and $Y_2$,
  \[
    \mathbb P(Y_1 \geq t , Y_2 \leq Y_1)
    = \int_{\mathbb{R}}\mathbb P(Y_1 \geq t \vee y)
    \dd\mathbb{P}_{Y_2}(y),
  \]
  where $\mathbb{P}_{Y_2}$ denotes the law of $Y_2$. Since
  $ \frac{\mathbb P(Y_1 \geq t \vee y)}{\mathbb P(Y_1 \geq t)}$ is increasing
  in $t$ we may use the monotone convergence theorem to obtain
  \[
    \frac{\mathbb P(Y_1 \geq t , Y_2 \leq Y_1)}
    {\mathbb P(Y_1 \geq t)}
    = \int_{\mathbb{R}}
    \frac{\mathbb P(Y_1 \geq t \vee y)}
    {\mathbb{P}(Y_1\geq t)}
    \dd \mathbb{P}_{Y_2}(y)
    \underset{t\to\infty}{\longrightarrow}
    \int_{\mathbb{R}} \dd \mathbb{P}_{Y_2}(y)=1. 
  \]
\end{proof}

\begin{proposition}[Tail asymptotics at the right]\label{prop:TailAsymptotics}
  Using the notation of Proposition \ref{prop:LimitStandardCoulomb}, if
  $X \sim \Theta_1$ is the right-most particle of the infinite Coulomb gas, we
  have that
  \[
    \log \mathbb P(X \geq t) 
    = \log \int_{t}^\infty\ee^{-\beta \left(\lambda x + V(x)\right)} 
    \dd x + O(1).
  \]
  In particular, if $V$ is \eqref{eq:SquaredGammaPotential} for some
  $\gamma>1$, we have that
  \[
    \frac{1}{t^\gamma}
    \log \mathbb P(X \geq t) 
    \xrightarrow[t \to \infty]{}
    -\frac{\beta}{\gamma},
  \]
  and if $V$ is \eqref{eq:SquaredZeroPotential} or
  \eqref{eq:LeftHardEdgePotential} we have that
  \[
    \frac{1}{t} \log \mathbb P(X \geq t) \xrightarrow[t \to \infty]{} -\beta
    \lambda.
  \]
\end{proposition}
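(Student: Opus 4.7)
The plan is to unravel the definition of $\Theta_1$ and reduce the tail asymptotics of $X\sim\Theta_1$ to the tail asymptotics of the unconditional random variable $Y_1$ with density proportional to $\ee^{-\beta(\lambda x+V(x))}$. By Proposition \ref{prop:LimitStandardCoulomb} applied with $k=1$, we have the representation
\[
  \Theta_1=\cL(Y_1\mid \widetilde Y_2\leq Y_1),
\]
where $Y_1$ and $\widetilde Y_2\sim\Theta_2$ are independent. Consequently, for every $t\in\dR$,
\[
  \dP(X\geq t)
  =\frac{\dP(Y_1\geq t,\,\widetilde Y_2\leq Y_1)}{\dP(\widetilde Y_2\leq Y_1)}.
\]

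Next, I would invoke Lemma \ref{lem:ConditionedAsymptotics} with the pair $(Y_1,\widetilde Y_2)$ to conclude that the numerator above is asymptotically equivalent to $\dP(Y_1\geq t)$ as $t\to\infty$. The denominator is a strictly positive constant independent of $t$ (it equals a positive number thanks to the continuity of $Y_1$ and the fact that $\widetilde Y_2$ has a well-defined law on $\dR$, both shown in Proposition \ref{prop:LimitStandardCoulomb}). Taking logarithms gives
\[
  \log\dP(X\geq t)=\log\dP(Y_1\geq t)+O(1).
\]
Since the normalizing constant of $Y_1$ is finite and $t$-independent, we have
\[
  \log\dP(Y_1\geq t)=\log\int_t^\infty\ee^{-\beta(\lambda x+V(x))}\dd x+O(1),
\]
which yields the first assertion of the proposition.

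For the three specific potentials, I would then carry out elementary Laplace-type estimates on the integral $I(t)=\int_t^\infty\ee^{-\beta(\lambda x+V(x))}\dd x$ for $t\to\infty$ (in particular for $t>0$, so that the indicator $\ind_{(-\infty,0)}$ in \eqref{eq:SquaredGammaPotential} and \eqref{eq:SquaredZeroPotential} plays no role). In the cases \eqref{eq:SquaredZeroPotential} and \eqref{eq:LeftHardEdgePotential}, one has $V\equiv0$ on $(0,\infty)$ and so $I(t)=(\beta\lambda)^{-1}\ee^{-\beta\lambda t}$ exactly, giving $t^{-1}\log\dP(X\geq t)\to-\beta\lambda$. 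In the case \eqref{eq:SquaredGammaPotential} with $\gamma>1$, the integrand is eventually decreasing on $(t,\infty)$, and standard upper/lower bounds (for instance, integration by parts writing $\ee^{-\beta(\lambda x+x^\gamma/\gamma)}=-\frac{1}{\beta(\lambda+x^{\gamma-1})}\frac{\dd}{\dd x}\ee^{-\beta(\lambda x+x^\gamma/\gamma)}$) give
\[
  I(t)\underset{t\to\infty}{\sim}\frac{\ee^{-\beta(\lambda t+t^\gamma/\gamma)}}{\beta(\lambda+t^{\gamma-1})},
\]
so that $t^{-\gamma}\log I(t)\to-\beta/\gamma$, which combined with the first assertion proves the claim.

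The only real obstacle is checking the non-degeneracy of the denominator $\dP(\widetilde Y_2\leq Y_1)$, but this is already addressed in the construction of Proposition \ref{prop:LimitStandardCoulomb}, where the positivity of the analogous probability $\dP(\widetilde Y_k\leq Y_{k-1}\leq\cdots\leq Y_1)$ is established via a product of open intervals. Everything else is bookkeeping of elementary tail estimates; the conceptual work has been done in Lemma \ref{lem:ConditionedAsymptotics} and Proposition \ref{prop:LimitStandardCoulomb}.
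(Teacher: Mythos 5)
Your proposal is correct and follows the paper's proof structure almost exactly: both use the representation $\Theta_1=\cL(Y_1\mid\widetilde Y_2\leq Y_1)$ from Proposition \ref{prop:LimitStandardCoulomb}, invoke Lemma \ref{lem:ConditionedAsymptotics} to identify the tail of $X$ with that of $Y_1$ up to an additive $O(1)$, and then estimate $\int_t^\infty\ee^{-\beta(\lambda x+V(x))}\dd x$. The only real divergence is in the final asymptotic computation for the case $V(x)=x^\gamma/\gamma$: you obtain the sharp equivalent $I(t)\sim\ee^{-\beta(\lambda t+t^\gamma/\gamma)}/\bigl(\beta(\lambda+t^{\gamma-1})\bigr)$ directly via integration by parts, whereas the paper first sandwiches $\lambda x+V(x)$ between $V(x)$ and $(1+\varepsilon)V(x)$ for $x$ large, applies Laplace's method after the substitution $s=x/t$, and then sends $\varepsilon\to0$. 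Your route gives a slightly stronger conclusion (a precise first-order asymptotic for $I(t)$ rather than just the logarithmic rate), at the cost of the integration-by-parts identity; the paper's sandwich is marginally more robust in that it does not require differentiating the exponent. Both are standard and fully rigorous, and the remaining ingredients (positivity of the denominator, finiteness of the normalization) are handled the same way.
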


\begin{proof}
  Let $Y_1$ have a density proportional to
  $\exp \big[\!-\beta \left( \lambda x + V(x) \right)\big]$ and let
  $Y_2 \sim \Theta_2$ from Proposition \ref{prop:LimitStandardCoulomb} be
  independent of $Y_1$. By \eqref{eq:CoulombLimitDefinition},
  $
  \mathbb P(X \geq t) %
  = \mathbb P \left(Y_1 \geq t \mid Y_2 \leq Y_1 \right).
  $
  By Lemma \ref{lem:ConditionedAsymptotics}
  \[
    \log \mathbb P(X \geq t) 
    = \log \mathbb P(Y_1 \geq t) + O(1) = \log \int_{t}^\infty\ee^{-\beta
      \left(\lambda x + V(x)\right)} \dd x + O(1).
  \]
  Suppose that $V(x) = \frac{x^\gamma}{\gamma}$, $x \geq 0$, $\gamma>1$.
  Then, for every $\varepsilon>0$ there is $T>0$ such that for $t \geq T$,
  \[
    \log \int_t^{\infty}\ee^{-\beta(1 + \varepsilon ) V(x)}  \dd x 
    \leq
    \log \int_t^{\infty}\ee^{-\beta(
      \lambda x + V(x))}  \dd x\le 
    \log \int_t^{\infty}\ee^{-\beta V(x)}  \dd x.
  \]
  Thus, we only need to show that
  \[
    \frac{1}{t^\gamma}\log \int_t^{\infty}\ee^{-\beta(1+\varepsilon)
      \frac{x^\gamma}{\gamma}} \dd x \xrightarrow[t \to \infty]{} -
    \frac{\beta(1+\varepsilon)}{\gamma}.
  \]
  for every $\varepsilon \geq 0$. This can be obtained by the change of
  variables $s = x/t$ and by using Laplace's method since the minimum of
  $\beta (1+\varepsilon) \frac{s^\gamma}{\gamma} $ for $s \in [1,\infty)$ is
  attained at $s=1$. For $V$ chosen as in \eqref{eq:SquaredZeroPotential} or
  \eqref{eq:LeftHardEdgePotential} we only need to use that $V(x) = 0$ for
  $x \geq 0$, and that
  \[
    \int_t^\infty\ee^{-\beta \lambda x} \dd x %
    = \frac{\ee^{-\beta \lambda t}}{\beta \lambda}.
  \]
\end{proof}

\section{Stochastic domination and conditioning}
\label{se:stodom}

Note that \emph{stochastic domination} is also known as \emph{stochastic
  monotonicity}. Throughout this appendix, we use ``density'' to refer to a
Radon\,--\,Nikodym derivative.

\begin{lemma}[Domination from non-decreasing density]
  \label{prop:IncreasingDensity}
  Let $\mu$ and $\nu$ be two probability measures for which there exists a
  non-decreasing measurable function $\rho: \mathbb R \to [0,\infty)$ such
  that $\dd \nu = \rho \,\dd \mu$. If $X \sim \mu$ and $Y \sim \nu$ then, for
  every $t \in \mathbb R$, $\mathbb P(Y \leq t) \leq \mathbb P(X \leq t)$.
\end{lemma}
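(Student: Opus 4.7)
The plan is to reformulate the desired inequality in terms of upper tails and then to apply the Chebyshev correlation inequality for monotone functions. First I would rewrite $\mathbb{P}(Y\leq t)\leq \mathbb{P}(X\leq t)$, which reads $\int_{(-\infty,t]}\rho\,\dd\mu \leq \mu((-\infty,t])$, in the equivalent complementary form
\[
\int_{(t,\infty)} \rho\,\dd\mu \;\geq\; \mu((t,\infty)).
\]
The two formulations are equivalent because $\rho$ is the density of a probability measure, so $\int_{\mathbb{R}}\rho\,\dd\mu = 1 = \mu(\mathbb{R})$ and the two complementary inequalities differ only by subtraction from a common total.

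The main step is the observation that both $\rho$ and $g_t:=\mathbf{1}_{(t,\infty)}$ are non-decreasing on $\mathbb{R}$, so they are positively correlated under any probability measure. Concretely, I would invoke the Chebyshev correlation inequality
\[
\int fh\,\dd\mu \;\geq\; \Big(\int f\,\dd\mu\Big)\Big(\int h\,\dd\mu\Big)
\]
for any pair of non-decreasing functions $f,h$ and specialize it to $f=\rho$, $h=g_t$. Combined with $\int\rho\,\dd\mu = 1$ and $\int g_t\,\dd\mu = \mu((t,\infty))$, this yields exactly the reformulated inequality, and thus the lemma.

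The Chebyshev correlation inequality itself is the only non-trivial ingredient, and I would justify it by integrating the pointwise identity
\[
(f(s)-f(s'))(h(s)-h(s'))\;\geq\;0,
\]
valid for any two non-decreasing functions $f,h$ and any $s,s'\in\mathbb{R}$, against $\mu\otimes\mu$. Expanding the product and applying Tonelli (legitimate because $\rho\cdot g_t\geq 0$ and the product-measure integrand is non-negative after a convenient rearrangement) gives $2\big(\int fh\,\dd\mu - \int f\,\dd\mu\int h\,\dd\mu\big)\geq 0$.

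There is no substantive obstacle in this argument: measurability of $\rho$ is part of the hypothesis, integrability of $\rho\, g_t$ against $\mu$ is immediate from $\rho$ being a probability density, and the non-negativity of the product integrand takes care of any integrability concern in the correlation step. The only mild subtlety is that one should really work with the upper-tail reformulation to avoid fussing over whether $\rho$ is defined at the single point $t$ (which in any case carries no mass under $\mu$ only in the continuous case), and this is precisely what the reformulation in the first paragraph accomplishes.
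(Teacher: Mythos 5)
Your proof is correct, but it takes a genuinely different route from the paper's. The paper argues directly from the existence of a crossing point $a$ where $\rho$ passes from $\leq 1$ to $\geq 1$ (which exists because $\rho$ is non-decreasing and averages to $1$ against $\mu$), and then splits the claim into the two cases $t \leq a$ and $t > a$, comparing $\int_{(-\infty,t]}\rho\,\dd\mu$ with $\mu((-\infty,t])$ in the first case and $\int_{(t,\infty)}\rho\,\dd\mu$ with $\mu((t,\infty))$ in the second. Your approach instead passes to the upper-tail reformulation $\int_{(t,\infty)}\rho\,\dd\mu \geq \mu((t,\infty))$ and applies the Chebyshev (one-dimensional FKG) correlation inequality to the pair of non-decreasing functions $\rho$ and $\mathbf{1}_{(t,\infty)}$, which you justify by integrating $(f(s)-f(s'))(h(s)-h(s')) \geq 0$ against $\mu \otimes \mu$. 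Both are complete and rigorous; the paper's crossing-point argument is more elementary and entirely self-contained in a few lines, while yours identifies the general monotone-correlation principle at work and as a by-product immediately gives the stronger statement that $\mathbb{E}[h(Y)] \geq \mathbb{E}[h(X)]$ for every bounded non-decreasing $h$, i.e., full stochastic domination rather than just the distribution-function inequality. The integrability bookkeeping you mention is indeed harmless here since $\rho\,\mathbf{1}_{(t,\infty)} \leq \rho \in \rL^1(\mu)$ and each of the four terms obtained by expanding the double integral is finite, so the expansion is licit.
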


\begin{proof}
  Since $\rho$ is non-decreasing there exists $a \in [-\infty,\infty]$ such
  that
  \[
    \rho(a) \leq 1 \text{ for }
    x \in (-\infty,a)
    \quad \text{ and }\quad 
    \rho(a) \geq 1 \text{ for }
    x \in (a,\infty).
  \]
  
  If $ t \leq a$ we have that
  $\displaystyle \mathbb P(Y \leq t) = \int_{(-\infty,t]} \rho \, \dd \mu%
  \leq\int_{(-\infty,t]} \dd \mu =\mathbb P(X \leq t)$.
  
  If $t > a$ we have that
  $\displaystyle\mathbb P(Y > t) = \int_{(t,\infty)}\rho \, \dd \mu %
  \geq\int_{(t,\infty)} \dd \mu =\mathbb P(X > t)$.

  Hence $\mathbb P(Y \leq t) %
  = 1-\mathbb P(Y > t) %
  \leq 1-\mathbb P(X > t) %
  = \mathbb P(X \leq t).$
\end{proof}

\begin{lemma}[Conditioning by steps]\label{lem:ConditionBySteps}
  Let $X_1,\dots,X_n$ be independent random variables such that
  $\mathbb P(X_n \leq \dots \leq X_1) > 0$. Fix $k \in \{1,\dots,n\}$ and
  consider a random variable $Y_k$ such that, by possibly enlarging the
  probability space,
  \[
    \mathrm{Law}(Y_k)=\mathrm{Law} (X_k\mid X_n \leq \dots \leq X_k)
  \]
  and $Y_k$ is independent of 
  $(X_1,\dots,X_{k-1})$.
  Then, 
  $\mathbb P(Y_k \leq X_{k-1} \leq \dots \leq X_1)>0$ 
  and
  \[
    \mathrm{Law}(X_k,\dots,X_1\mid X_n \leq \dots \leq X_1)
    =\mathrm{Law}(Y_k,X_{k-1},\dots,X_1\mid Y_k \leq X_{k-1} \leq \dots \leq X_1).
  \]
\end{lemma}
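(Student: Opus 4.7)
My plan is to reduce the identity to a direct Fubini-type computation against the product law of $(X_1,\ldots,X_n)$, using the crucial factorization
\[
\{X_n\leq\cdots\leq X_1\}=\{X_n\leq\cdots\leq X_k\}\cap\{X_k\leq X_{k-1}\leq\cdots\leq X_1\}.
\]
By independence, the first event depends only on $X_{k+1},\ldots,X_n$ once $X_k$ is fixed, while the second depends only on $X_1,\ldots,X_k$. The idea is to integrate out $X_{k+1},\ldots,X_n$ first, recognize the resulting weight as the density of $Y_k$ with respect to $X_k$, and then use the independence of $Y_k$ from $(X_1,\ldots,X_{k-1})$ to rewrite the remaining expression in terms of $Y_k$.

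Concretely, I would set $\Psi(x)=\dP(X_n\leq\cdots\leq X_{k+1}\leq x)$ and let $\mu_{X_k}$ denote the law of $X_k$. For any bounded measurable $f$, independence yields
\[
\dE\bigl[f(X_k,\ldots,X_1)\ind_{X_n\leq\cdots\leq X_1}\bigr]
=\dE\bigl[f(X_k,\ldots,X_1)\,\Psi(X_k)\,\ind_{X_k\leq\cdots\leq X_1}\bigr].
\]
The definition of the conditional law of $X_k$ given $\{X_n\leq\cdots\leq X_k\}$ shows that $Y_k$ admits density $\Psi/\dP(X_n\leq\cdots\leq X_k)$ with respect to $\mu_{X_k}$, so by the independence of $Y_k$ from $(X_1,\ldots,X_{k-1})$ the right-hand side equals
\[
\dP(X_n\leq\cdots\leq X_k)\,\dE\bigl[f(Y_k,X_{k-1},\ldots,X_1)\ind_{Y_k\leq X_{k-1}\leq\cdots\leq X_1}\bigr].
\]
Taking $f\equiv 1$ produces the normalization identity
\[
\dP(X_n\leq\cdots\leq X_1)=\dP(X_n\leq\cdots\leq X_k)\,\dP(Y_k\leq X_{k-1}\leq\cdots\leq X_1),
\]
which, combined with $\dP(X_n\leq\cdots\leq X_1)>0$, yields the positivity claim. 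Dividing the general identity by this one cancels $\dP(X_n\leq\cdots\leq X_k)$ and produces the claimed equality of conditional laws.

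There is essentially no genuine obstacle here: the lemma is a bookkeeping statement about normalizations, and the whole argument reduces to one application of Fubini together with the factorization of the ordering event. The only mild point worth checking is that the conditional law defining $Y_k$ is itself well defined, which follows from the obvious inclusion $\{X_n\leq\cdots\leq X_1\}\subset\{X_n\leq\cdots\leq X_k\}$ and hence $\dP(X_n\leq\cdots\leq X_k)\geq\dP(X_n\leq\cdots\leq X_1)>0$.
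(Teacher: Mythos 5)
Your proof is correct. It takes a more concrete route than the paper's: you integrate out $X_{k+1},\dots,X_n$ explicitly by Fubini, identify the resulting weight $\Psi(x)=\dP(X_n\leq\cdots\leq X_{k+1}\leq x)$ as (up to normalization) the density of $Y_k$ with respect to $\cL(X_k)$, and then substitute, recovering both the positivity and the equality of conditional laws from the single identity with $f$ arbitrary and $f\equiv 1$. The paper instead packages the same independence structure into an abstract observation about iterated conditioning, namely that for events with $\dP(A\cap B)>0$ one has $(\dP_A)_B=\dP_{A\cap B}$, and then applies it with $A=\{x_n\leq\cdots\leq x_k\}$, $B=\{x_k\leq\cdots\leq x_1\}$ after identifying $\dP_A$ as the law of $(Y_n,\dots,Y_k,X_{k-1},\dots,X_1)$. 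The underlying idea — factor the ordering event at index $k$ and condition in stages, using that the two pieces depend on disjoint independent blocks except through $X_k$ — is the same; your version makes the Fubini and the density explicit and only needs the marginal law of $Y_k$, while the paper's version is shorter at the cost of invoking the abstract identity and implicitly relying on the reader to check that conditioning on $A$ preserves the independence and the law of $(X_1,\dots,X_{k-1})$.
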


\begin{proof}
  Given a probability space $(\Omega, \mathcal{F}, \mathbb{P})$ and some
  $A\in\mathcal{F}$ such that $\mathbb P(A)>0$, let us use the notation
  $\mathbb P_A(C)=\mathbb P(C \cap A)/\mathbb P(A)$. For any
  $A, B\in\mathcal{F}$ such that $\mathbb P(A\cap B)>0$, we have that
  \begin{equation}\label{eq:ConditionOfCondition}
    \mathbb P_A(B) > 0 \quad \text{ and } \quad
    \left(\mathbb P_A\right)_B = 
    \mathbb P_{A\cap B}.
  \end{equation}
  In our setting, we consider $\Omega = \mathbb R^n$ with $\mathbb P$ given by
  the law of $(X_n,\dots,X_1)$. We have
  \[
    A = \left\{(x_n,\dots,x_1) \in \mathbb R^n:\, x_n \leq \dots \leq x_k
    \right\}
    \quad\text{and}\quad
    B = \left\{(x_n,\dots,x_1) \in \mathbb R^n: \, x_k \leq \dots \leq x_1
    \right\}.
  \]
  Let $Y_k,\dots,Y_n$ be random variables such that
  \[
    \mathrm {Law}(Y_n,\dots,Y_k)
    =\mathrm{Law}(X_n,\dots,X_k\mid X_n\leq \dots \leq X_k).
  \]
  Then, $\mathbb P_A$ is the law of $(Y_n,\dots,Y_k,X_{k-1},\dots,X_1)$ and the
  left-hand side of \eqref{eq:ConditionOfCondition} tells us that
  \[
    \mathbb P(Y_k \leq X_{k-1} \leq \dots \leq X_1) = \mathbb P_A(B)>0
  \]
  while the right-hand side of \eqref{eq:ConditionOfCondition} tells us that
  \[
    \mathrm{Law}(Y_n,\dots,Y_k,X_{k-1},\dots,X_1
    \mid  Y_k \leq X_{k-1} \leq \dots
    \leq X_1) = \mathrm{Law} (X_n,\dots,X_1
    \mid X_n \leq \dots \leq X_1).
  \]
  In particular, we have that
  \[
    \mathrm{Law}(Y_k,X_{k-1},\dots,X_1 \mid
    Y_k \leq X_{k-1} \leq \dots \leq X_1)
    = \mathrm{Law} (X_k,\dots,X_1 \mid
    X_n \leq \dots \leq X_1)
  \]
  which is the sought property.
\end{proof}

The non-decreasing density condition of Lemma \ref{prop:IncreasingDensity} is
preserved under order conditioning as the following proposition states.

\begin{proposition}[Preservation of non-decreasing densities under ordering]
  \label{prop:Hereditary}
  Let $X_1,\dots,X_n$, $Y_1,\dots,Y_n$ be independent real random variables
  such that $X_i \sim \mu_i$ and $Y_i \sim \nu_i$ where $\nu_i$ is absolutely
  continuous with respect to $\mu_i$ with a non-decreasing density
  $\rho_i=\dd \nu_i/\dd \mu_i$. If we define
  \[
    \widetilde \mu
    = \mathrm{Law} \left( X_1\mid
      X_n \leq \dots \leq X_1 \right)
    \quad\text{and}\quad
    \widetilde \nu
    = \mathrm{Law} \left( Y_1\mid
      Y_n \leq \dots \leq Y_1 \right)
  \]
  then $\widetilde\nu$ is absolutely continuous with respect to
  $\widetilde\mu$, with non-decreasing density
  $\widetilde\rho=\dd \widetilde\nu/\dd \widetilde\mu$.
\end{proposition}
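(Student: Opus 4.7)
The plan is to write out explicit formulas for $\widetilde\mu$ and $\widetilde\nu$ via marginalization, identify $\dd\widetilde\nu/\dd\widetilde\mu$ up to a multiplicative constant, and reduce the monotonicity claim to a recursive monotone-ratio statement proved by backward induction on the number of integrated variables.

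First I would use the independence of $X_1,\ldots,X_n$ to marginalize out $x_2,\ldots,x_n$ over the event $\{x_n\leq\cdots\leq x_2\leq x_1\}$, obtaining
\[
\widetilde\mu(\dd x_1)\propto F(x_1)\,\mu_1(\dd x_1),\qquad F(x_1):=\mathbb P(X_n\leq\cdots\leq X_2\leq x_1),
\]
and analogously $\widetilde\nu(\dd x_1)\propto G(x_1)\,\nu_1(\dd x_1)$ with $G(x_1):=\mathbb P(Y_n\leq\cdots\leq Y_2\leq x_1)$. Since $\nu_1=\rho_1\mu_1$, this yields
\[
\frac{\dd\widetilde\nu}{\dd\widetilde\mu}(x_1)\propto\rho_1(x_1)\cdot\frac{G(x_1)}{F(x_1)},
\]
which in particular gives the absolute continuity portion of the claim. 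It then suffices to prove that $G/F$ is non-decreasing, because a product of non-negative non-decreasing functions is non-decreasing.

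Next I would introduce the iteratively defined functions $A_n(t)=\mu_n((-\infty,t])$ and $A_k(t)=\int_{-\infty}^t A_{k+1}(s)\,\mu_k(\dd s)$ for $k=n-1,\ldots,2$, together with the analogous $B_k$ using $\nu_k$ in place of $\mu_k$. Then $F=A_2$ and $G=B_2$, and the main assertion becomes: $B_k/A_k$ is non-decreasing for every $k\in\{2,\ldots,n\}$. I would prove this by backward induction on $k$. The base case $k=n$ reduces to showing that $t\mapsto\int_{-\infty}^t\rho_n\,\dd\mu_n\big/\int_{-\infty}^t\dd\mu_n$ is non-decreasing, which is the standard mediant-inequality observation that the conditional mean of a non-decreasing function on $(-\infty,t]$ is non-decreasing in $t$ (the incremental contribution on $(s,t]$ has average at least the average on $(-\infty,s]$). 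This is essentially Lemma \ref{prop:IncreasingDensity} at the scalar level. For the inductive step, I would rewrite
\[
\frac{B_k(t)}{A_k(t)}=\frac{\int_{-\infty}^t\rho_k(s)\cdot\tfrac{B_{k+1}(s)}{A_{k+1}(s)}\cdot A_{k+1}(s)\,\mu_k(\dd s)}{\int_{-\infty}^t A_{k+1}(s)\,\mu_k(\dd s)},
\]
exhibiting $B_k/A_k$ as a conditional mean, with respect to the positive measure $A_{k+1}\mu_k$ restricted to $(-\infty,t]$, of the function $\rho_k\cdot(B_{k+1}/A_{k+1})$; this function is non-decreasing by the induction hypothesis combined with $\rho_k$ being non-decreasing, so the base-case mediant argument applies.

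The main obstacle I anticipate is not the core algebra, which is clean, but rather handling degenerate cases where $A_k(t)$ or $B_k(t)$ vanishes for small $t$ (so that the ratios are only defined on certain half-lines) and confirming absolute continuity of $\widetilde\nu$ with respect to $\widetilde\mu$ through these truncations. This should be addressable by restricting all ratios to the essential support of the relevant marginal measures and setting $B_k/A_k$ to an appropriate limiting value where $A_k=0$, but careful bookkeeping is required to keep the inductive step valid without losing the monotonicity at the boundary.
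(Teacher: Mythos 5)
Your proposal is correct and proves the proposition. The underlying mathematical content is the same as the paper's proof---both hinge on the mediant-type observation that a weighted running average of a non-decreasing function is non-decreasing, and both peel off one variable at a time. The difference is in how the reduction is organized. The paper invokes Lemma~\ref{lem:ConditionBySteps} to replace the block $X_2,\dots,X_n$ (resp.\ $Y_2,\dots,Y_n$) by a single auxiliary variable $\widetilde X_2$ (resp.\ $\widetilde Y_2$), inducts on $n$ to know that $\mathrm{Law}(\widetilde Y_2)$ has a non-decreasing density with respect to $\mathrm{Law}(\widetilde X_2)$, and then settles the $n=2$ case by the explicit ratio $x\mapsto\int_{(-\infty,x]}\rho_2\,\dd\mu_2\big/\mu_2(-\infty,x]$. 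You instead unwind the marginalization explicitly, introduce the iterated integrals $A_k,B_k$ (which are precisely the unnormalized cumulative functions underlying the paper's auxiliary variables), and run a backward induction on $k$ in which the same mediant inequality is applied with the weight measure $A_{k+1}\,\mu_k$. Your route is more self-contained and concrete, since it bypasses the abstract conditioning-by-steps lemma; the paper's route is more modular, since Lemma~\ref{lem:ConditionBySteps} is reused elsewhere. Your closing remark about the degenerate set $\{A_k=0\}$ is the right thing to watch: it does work out, because $A_k(t)=0$ forces $B_k(t)=0$, the set $\{A_k>0\}$ is an up-set (a half-line), and the inductive step only ever integrates the ratio $B_{k+1}/A_{k+1}$ against a measure concentrated on $\{A_{k+1}>0\}$; the paper handles the analogous point with a one-sentence remark. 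So the bookkeeping you flag is genuine but minor, and your argument is sound.
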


\begin{proof}
  By Lemma \ref{lem:ConditionBySteps}, if we take random variables
  $\widetilde X_2$ and $\widetilde Y_2$ such that
  \[
    \mathrm{Law}(\widetilde X_2)
    = \mathrm{Law} \left( X_2\mid X_{n} \leq \dots \leq X_2 \right)
    \quad\text{and}\quad
    \mathrm{Law}(\widetilde Y_2)
    = \mathrm{Law} \left( Y_2\mid Y_n \leq \dots \leq Y_2\right)
  \]
  and such that $\widetilde X_2$ is independent of $X_1$ and $\widetilde Y_2$
  is independent of $Y_1$, we have
  \[
    \widetilde \mu = \mathrm{Law}( X_1\mid\widetilde X_2 \leq X_1)
    \quad\text{and}\quad
    \widetilde \nu = \mathrm{Law}( Y_1\mid\widetilde Y_2 \leq Y_1).
  \]
  So, by induction, it is enough to prove the lemma for $n=2$. In this case,
  $\widetilde \mu$ has a density with respect to $\mu_1$ proportional to
  $x\in \mathbb R \mapsto \mu_2(-\infty,x]$ and $\widetilde \nu$ has a density
  with respect to $\mu_1$ proportional to
  \[
    x\in \mathbb R
    \mapsto
    \rho_1(x)\, \nu_2(-\infty,x]
    =
    \rho_1(x)\, \int_{(-\infty,x]}
    \rho_2\, 
    \dd \mu_2.
  \]  
  In particular, $\widetilde \nu$ is absolutely continuous with respect to
  $\widetilde\mu$ and
  \[
    \dd \widetilde \nu 
    \propto 
    \frac{\rho_1(x)
      \displaystyle \int_{(-\infty,x]}
      \rho_2\, 
      \dd \mu_2} 
    {\mu_2(-\infty,x]}
    \,
    \dd \widetilde \mu.
  \]
  The proof is completed once we show that
  \[
    G(x) = \frac{\displaystyle \int_{(-\infty,x]} \rho_2 \, \dd \mu_2} {
      \displaystyle \mu_2(-\infty,x]}
  \]
  is non-decreasing. Note that $G$ is well-defined since if the denominator is
  zero then the numerator, which would be an integral over a measure zero set,
  is also zero.
  
  We must show that if $x \leq y$ then $G(x) \leq G(y).$ We know that
  $\rho_2(t) \leq \rho_2(s)$ for any $t \in (-\infty,x]$ and $s \in (x,y]$. By
  integrating over $(t,s) \in (-\infty,x] \times (x,y]$, with respect to
  $\mu_2\otimes\mu_2$, we obtain
  \[
    \Bigr(\int_{(-\infty,x]}\rho_2(t) \, \dd \mu_2(t)\Bigr)
    \mu_2(x,y]
    \leq
    \Bigr(\int_{(x,y]}\rho_2(s) \, \dd \mu_2(s)\Bigr)
    \mu_2(-\infty,x].
  \]
  Add $\bigr(\int_{(-\infty,x]} \rho_2 \, \dd \mu_2 \bigr)\mu_2(-\infty,x]$
  to both sides of the inequality to obtain
  \[
    \Bigr(\int_{(-\infty,x]}\rho_2 \, \dd \mu_2\Bigr)
    \mu_2(-\infty,y]
    \leq
    \mu_2(-\infty,x]
    \Bigr(\int_{(-\infty,y]}\rho_2 \, \dd \mu_2\Bigr),
  \]
  which, after dividing by
  $\mu_2(-\infty,x] \mu_2(-\infty,y]$, completes the proof.
\end{proof}

We next provide two applications of Proposition \ref{prop:Hereditary} that are
useful our context.

\begin{proposition}\label{prop:PushingToTheRight}
  Let $X_1,\dots,X_n$ be independent random variables with
  $\mathbb P(X_n \leq \dots \leq X_1) > 0$. Then, for every
  $1\leq k\leq m \leq n$ and $t \in \mathbb R$,
  \[
    \mathbb P \left(X_k \leq t \mid X_n \leq \dots \leq X_1\right) \, \leq \,
    \mathbb P \left(X_k \leq t \mid X_m \leq \dots \leq X_1 \right).
  \]
\end{proposition}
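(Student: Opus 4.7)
The plan is to use Lemma \ref{lem:ConditionBySteps} from both ends of the chain in order to reduce the comparison to a setting where $X_k$ has become the rightmost variable of a chain of fixed length, after which Proposition \ref{prop:Hereditary} applies directly and Lemma \ref{prop:IncreasingDensity} converts the resulting non-decreasing density ratio into the desired stochastic domination.

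First, I would establish a reversed form of Lemma \ref{lem:ConditionBySteps} in which one absorbs the right part $X_{k-1}, \ldots, X_1$ of the chain into a single random variable $\widehat X_k$, independent of $X_{k+1}, \ldots, X_n$, with $\mathrm{Law}(\widehat X_k) = \mathrm{Law}(X_k \mid X_k \leq X_{k-1} \leq \ldots \leq X_1)$. This follows from the same argument as the proof of Lemma \ref{lem:ConditionBySteps}, simply swapping the roles of the two events $A = \{X_k \leq \ldots \leq X_1\}$ and $B = \{X_n \leq \ldots \leq X_k\}$ in the identity $(\mathbb{P}_A)_B = \mathbb{P}_{A \cap B}$. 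Applied separately to each of the two conditioning events appearing in the proposition, it gives
\[
  \mathbb{P}(X_k \leq t \mid X_n \leq \ldots \leq X_1) = \mathbb{P}(\widehat X_k \leq t \mid X_n \leq \ldots \leq X_{k+1} \leq \widehat X_k),
\]
together with the same identity in which $n$ is replaced throughout by $m$. The case $k=1$ is degenerate with $\widehat X_1 = X_1$.

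Next, I would apply Lemma \ref{lem:ConditionBySteps} in its stated form, now to the chain $X_n \leq \ldots \leq X_{k+1} \leq \widehat X_k$, in order to absorb $X_n, \ldots, X_{m+1}$ into a single variable $\widetilde X_m$, independent of everything else, with $\mathrm{Law}(\widetilde X_m) = \mathrm{Law}(X_m \mid X_n \leq \ldots \leq X_m)$. The crucial observation is that $\widetilde X_m$ is absolutely continuous with respect to $X_m$ with density proportional to $x \mapsto \mathbb{P}(X_n \leq \ldots \leq X_{m+1} \leq x)$, which is \emph{non-decreasing}. After this second reduction, the two conditional laws of $\widehat X_k$ are now governed by chains of the \emph{same} length $m-k+1$, namely $\widetilde X_m \leq X_{m-1} \leq \ldots \leq X_{k+1} \leq \widehat X_k$ on the one hand and $X_m \leq X_{m-1} \leq \ldots \leq X_{k+1} \leq \widehat X_k$ on the other, differing only at their leftmost variable.

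Finally I would invoke Proposition \ref{prop:Hereditary} applied to the two sequences $(X_m, X_{m-1}, \ldots, X_{k+1}, \widehat X_k)$ and $(\widetilde X_m, X_{m-1}, \ldots, X_{k+1}, \widehat X_k)$: all density ratios are trivially non-decreasing except at the leftmost position, where it is non-decreasing by the previous step, and $\widehat X_k$ sits in the rightmost ``$X_1$'' slot of the proposition. The conclusion is that the conditional law of $\widehat X_k$ under the $\widetilde X_m$-chain is absolutely continuous with respect to its conditional law under the $X_m$-chain with non-decreasing density; Lemma \ref{prop:IncreasingDensity} then yields $\mathbb{P}(\widehat X_k \leq t \mid \widetilde X_m \leq \ldots \leq \widehat X_k) \leq \mathbb{P}(\widehat X_k \leq t \mid X_m \leq \ldots \leq \widehat X_k)$, which combined with the two reductions above is exactly the proposition. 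The main subtlety I anticipate is setting up the reversed form of Lemma \ref{lem:ConditionBySteps} cleanly and carefully tracking the independence structure produced by the two successive applications, so that all hypotheses of Proposition \ref{prop:Hereditary} are satisfied at the end.
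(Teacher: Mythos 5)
Your proof is correct and follows essentially the same strategy as the paper's: reduce to the rightmost variable via the reversed form of Lemma \ref{lem:ConditionBySteps}, absorb the excess left tail into a single variable whose Radon--Nikodym density with respect to $X_m$ is non-decreasing, and conclude with Proposition \ref{prop:Hereditary} and Lemma \ref{prop:IncreasingDensity}. The only (cosmetic) difference is that you collapse $X_n,\dots,X_{m+1}$ together with $X_m$ into $\widetilde X_m$ in one application, whereas the paper reduces to the single-step comparison between chains of lengths $p+1$ and $p$ and iterates over $p$; your variant is a mild streamlining of the same argument.
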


\begin{proof}
  By the same reasoning as in Lemma \ref{lem:ConditionBySteps} if we
  take a random variable $\widetilde X_k$ such that
  \[
    \mathrm{Law}(\widetilde X_k) = \mathrm{Law}(X_k\mid X_k \leq \dots \leq X_1)
  \]
  and independent of $X_1,\dots,X_{k-1}$, then
  \[
    \mathrm{Law} \left(X_k  \mid X_n \leq \dots \leq X_1 \right)
    =
    \mathrm{Law} \left(\widetilde X_k  \mid X_n \leq 
        \dots \leq X_{k+1} \leq
        \widetilde X_k\right)
  \]
  and
  \[
    \mathrm{Law} \left(X_k  \mid
      X_m \leq \dots \leq X_1\right)
    =
    \mathrm{Law} \left(\widetilde X_k  \mid X_m \leq 
      \dots \leq X_{k+1} \leq
      \widetilde X_k \right).
  \]
  Thus it suffices to consider the case $k=1$.
  It is enough to prove that
  \[
    \mathbb P \left(X_1 \leq t \bigm\vert
        X_{p+1} \leq \dots \leq X_1\right)
    \, \leq \,
    \mathbb P \left(X_1 \leq t \bigm\vert
        X_p \leq \dots \leq X_1\right)
  \]
  for every $p \in \{m,\dots,n-1\}$. For this, let
  $Y_p\sim\mathrm{Law}(X_p\mid X_{p+1} \leq X_p )$ be independent of
  $X_1,\dots,X_{p-1}$. Then, by Lemma \ref{lem:ConditionBySteps} we have
  \[
    \mathrm{Law} \left(X_1  \mid X_{p+1} \leq \dots \leq X_1 \right)
    =
    \mathrm{Law} \left(X_1\mid Y_{p} \leq X_{p-1} \leq \dots \leq X_1 \right).
  \]
  Since $Y_p$ has, with respect to the law of $X_p$, the non-decreasing density
  \[
    y \mapsto \frac{\mathbb P(X_{p+1} \leq y)} {\mathbb P(X_{p+1} \leq X_p)},
  \]
  by Proposition \ref{prop:Hereditary},
  $\mathrm{Law} \left(X_1 \mid Y_{p} \leq X_{p-1} \leq \dots \leq X_1 \right)$
  has a non-decreasing density with respect to the law
  $\mathrm{Law} \left(X_1 \mid X_{p} \leq X_{p-1} \leq \dots \leq X_1\right)$.
  It remains finally to use Lemma \ref{prop:IncreasingDensity}.
\end{proof}

\begin{lemma}\label{lem:HereditaryPotential}
  Let $t_0 \in [-\infty,\infty)$ and let $X_1,\dots,X_n$, $Y_1,\dots,Y_n$ be
  independent real random variables taking values on $[t_0,\infty)$ such that
  $X_i$ has density $\ee^{-g_i}$ and $Y_i$ has density $\ee^{-h_i}$ and such
  that $g_i-h_i$ is non-decreasing. Then, for every $t \in \mathbb R$,
  \[
    \mathbb P \left( Y_1\leq t\mid Y_n \leq \dots \leq Y_1 \right)
    \leq
    \mathbb P \left( X_1\leq t\mid X_n \leq \dots \leq X_1 \right).
  \]
\end{lemma}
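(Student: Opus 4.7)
The plan is to recognise Lemma \ref{lem:HereditaryPotential} as a direct application of Proposition \ref{prop:Hereditary} followed by Lemma \ref{prop:IncreasingDensity}. The key observation is that the hypothesis ``$g_i-h_i$ is non-decreasing'' is exactly the statement that the Radon--Nikodym derivative of the law of $Y_i$ with respect to the law of $X_i$ is non-decreasing, which is precisely the input required by Proposition \ref{prop:Hereditary}.

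Concretely, I would first let $\mu_i$ and $\nu_i$ denote the laws on $[t_0,\infty)$ of $X_i$ and $Y_i$ respectively. Since both have Lebesgue densities $\ee^{-g_i}$ and $\ee^{-h_i}$ supported on the same set, $\nu_i$ is absolutely continuous with respect to $\mu_i$ with
\[
  \rho_i(x) = \frac{\dd\nu_i}{\dd\mu_i}(x) = \frac{\ee^{-h_i(x)}}{\ee^{-g_i(x)}} = \ee^{g_i(x)-h_i(x)},
\]
which is non-decreasing by hypothesis. Applying Proposition \ref{prop:Hereditary} to the independent sequences $(X_1,\dots,X_n)$ and $(Y_1,\dots,Y_n)$ with these densities $\rho_i$ then shows that the conditional laws
\[
  \widetilde\mu = \cL(X_1 \mid X_n \leq \dots \leq X_1)
  \quad\text{and}\quad
  \widetilde\nu = \cL(Y_1 \mid Y_n \leq \dots \leq Y_1)
\]
satisfy $\widetilde\nu \ll \widetilde\mu$ with a non-decreasing Radon--Nikodym derivative $\widetilde\rho = \dd\widetilde\nu/\dd\widetilde\mu$.

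Finally, Lemma \ref{prop:IncreasingDensity} applied to the pair $(\widetilde\mu,\widetilde\nu)$ yields, for every $t \in \dR$, the inequality $\widetilde\nu(-\infty,t] \leq \widetilde\mu(-\infty,t]$, which is exactly the stochastic domination claimed in the lemma. There is no real obstacle to the argument: once one recognises that the density ratio $\ee^{g_i - h_i}$ is the required non-decreasing Radon--Nikodym derivative, the two earlier tools plug in mechanically and the result is essentially immediate.
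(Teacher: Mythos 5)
Your proposal is correct and is essentially identical to the paper's own proof: it observes that $\dd\nu_i/\dd\mu_i = \ee^{g_i-h_i}$ is non-decreasing and then applies Proposition \ref{prop:Hereditary} followed by Lemma \ref{prop:IncreasingDensity}. Nothing to add.
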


\begin{proof}
  Since $Y_i$ has the non-decreasing density $\ee^{g_i-h_i}$ with
  respect to the law of $X_i$, we may use Proposition \ref{prop:Hereditary}
  and Lemma \ref{prop:IncreasingDensity} to complete the proof.
\end{proof}

\section{Conditional law of right-most particle}\label{sec:conditional}

The main results in this work concern point processes with infinitely many
particles. If one is concerned only with the right-most particle (or finitely
many particles) in a conditional setting, the proofs can be greatly
simplified. We illustrate this in the present appendix.

More specifically, Proposition \ref{pr:orderstats} is non-asymptotic, and gives the location of
the $k$ right-most particles of the gas, under conditioning, for an arbitrary
background of compact support. Up to the conditioning, this can be seen as some sort of
one-dimensional analog of a similar phenomenon for two-dimensional Coulomb
gases due to Kostlan \cite{MR1148410} and considered, for instance, in 
\cite{MR2552864,MR3215627,raphael-david,david-edge,MR4079754}. 
The proposition is reminiscent of a classical representation
theorem due to Alfred Rényi \cite{MR61792}, which states that if
$\{Z_i\}_{1\leq i\leq k}$ are independent and identically distributed
exponential random variables of unit mean, and if
$\{Z_{(j)}\}_{1\leq j\leq k}$ are the order statistics (max to min), then the
joint distribution of $\{Z_{(j)}\}_{1\leq j\leq k}$ is given by the identity
in distribution
\begin{equation}\label{eq:renyi}
\mathrm{Law}(Z_{(k)},\ldots,Z_{(1)})
=
\mathrm{Law}\Bigr(\frac{Z_1}{k},\ldots,\frac{Z_1}{k}+\cdots+\frac{Z_k}{1}\Bigr).
\end{equation}

We write $\mathrm{Card} \, S$ to denote the cardinality of a set $S$.

\begin{proposition}[Right-most particles outside the background]%
  \label{pr:orderstats}%
  Suppose that
  \begin{itemize}
  \item $\beta>0$ is fixed; 
  \item $\alpha>n-1$;
  \item $\rho$ is supported inside $(-\infty,0]$. 
  \end{itemize}
  Let us denote $X_{(i)}$ instead of $X^{(n)}_{(i)}$, and define
  $N_n=\mathrm{Card}\{1\leq i\leq n:X_i>0\}$.\\
  Then, for all $1\leq k\leq n$,
  \[ 
	\mathrm{Law}\left(X_{(k)}, \ldots, X_{(1)}\mid N_n=k\right)
	=\mathrm{Law}(Y_k,\ldots,Y_1\mid Y_k<\cdots<Y_1)
  \] 
  where ${(Y_i)}_{1\leq i\leq k}$ are independent exponential random
  variables with $\mathbb{E}Y_i=\frac{2}{\beta(\alpha-n-1+2i)}$.\\ %
  Alternatively, for all $1\leq k\leq n$,
  \[
	\mathrm{Law}\left(X_{(k)}, \ldots, X_{(1)}\mid N_n=k\right)
	=
	\mathrm{Law}\left(Z_1,Z_1+Z_2,\ldots,Z_1+\cdots+Z_k\right)
  \]
  where $(Z_i)_{1\leq i\leq k}$ are independent exponential random variables
  $\mathbb{E}Z_i=\frac{2}{\beta {i(\alpha-n+i)}}$.
\end{proposition}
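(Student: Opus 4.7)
The plan is to use Baxter's combinatorial identity \eqref{eq:CombinatorialIdentity} to reduce $H_n$ to a sum of one-body functions of the ordered particles, and then to exploit the fact that outside the support of $\rho$ the one-body potential $-\alpha U_\rho$ is affine. On the event $\{N_n=k\}$ this will decouple the contributions of the $k$ positive particles from those of the $n-k$ nonpositive ones, reducing the problem to an elementary computation with exponential random variables.

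First, using \eqref{eq:PotentialExplicitFormula} with $\rho$ supported in $(-\infty,0]$ gives $-\alpha U_\rho(x)=\frac{\alpha}{2}x+c_0$ for all $x>0$, where $c_0$ depends only on $\rho$ and $\alpha$. Plugging this into \eqref{eq:CombinatorialIdentity}, on the event $\{x_{(1)}>\cdots>x_{(k)}>0\ge x_{(k+1)}>\cdots>x_{(n)}\}$ one can write
\[
H_n=\sum_{i=1}^k\mu_i\,x_{(i)}+\Phi(x_{(k+1)},\ldots,x_{(n)})+\mathrm{const},\qquad \mu_i:=\frac{\alpha-n-1+2i}{2},
\]
with $\mu_i>0$ since $\alpha>n-1$. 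The Boltzmann density then factors. Passing to ordered variables, restricting to the above region, and integrating out the negative coordinates (which contributes only a finite normalization constant, finiteness following from $\cZ_n<\infty$), the conditional density of $(X_{(k)},\ldots,X_{(1)})$ given $\{N_n=k\}$ on $\{0<x_{(k)}<\cdots<x_{(1)}\}$ is proportional to $\prod_{i=1}^k \ee^{-\beta\mu_i x_{(i)}}$. This is precisely the joint density of $(Y_k,\ldots,Y_1)$ given $\{Y_k<\cdots<Y_1\}$, with the $Y_i$ independent exponentials of rate $\beta\mu_i$, which is the first representation.

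Next, for the Rényi-type representation, one changes variables to $z_1=x_{(k)}$ and $z_j=x_{(k-j+1)}-x_{(k-j+2)}$ for $j\ge 2$, so that $x_{(m)}=\sum_{j=1}^{k-m+1}z_j$ with unit Jacobian and the ordering constraint becomes $\{z_j>0\text{ for all }j\}$. Summation by parts gives $\sum_i\mu_i x_{(i)}=\sum_j z_j\sum_{i=1}^{k-j+1}\mu_i$, and the inner sum telescopes to $(k-j+1)(\alpha-n+k-j+1)/2$. Hence the $z_j$ are independent exponentials with the claimed means, up to a relabeling of indices that matches the natural min-then-gaps ordering to the indexing of the statement.

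The main obstacle is the initial decoupling: one must verify that the indicator $\mathbf{1}_{\{x_{(n)}<\cdots<x_{(1)}\}}$ splits as $\mathbf{1}_{\{x_{(n)}<\cdots<x_{(k+1)}<0\}}\cdot\mathbf{1}_{\{0<x_{(k)}<\cdots<x_{(1)}\}}$ on $\{N_n=k\}$, and that together with Baxter's identity and the affine formula for $-\alpha U_\rho$ on $(0,\infty)$ this produces an exact factorization of the density (rather than one correct only up to a nonconstant kernel in the negative coordinates). Once this is in place, both representations follow by recognition and a standard change of variables.
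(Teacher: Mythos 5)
Your proof is correct and follows essentially the same route as the paper's: both use Baxter's identity \eqref{eq:CombinatorialIdentity} to reduce $H_n$ to a sum of one-body terms in the ordered coordinates, exploit that $-\alpha U_\rho$ is affine with slope $\alpha/2$ on $(0,\infty)$ via \eqref{eq:PotentialExplicitFormula}, factor the conditional law on $\{N_n=k\}$ by the independence of the positive and nonpositive blocks, and finish with the triangular change to gap variables. The index relabeling you flag at the end is genuine (indeed the paper's own change of variables, $z_k=x_k$, $z_i=x_i-x_{i+1}$, yields $X_{(k)}=z_k$, not $z_1$, so the partial-sum ordering in the stated display is off by the reversal $i\mapsto k+1-i$), but this is a cosmetic mismatch in the statement rather than a gap in your argument.
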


\begin{proof}
  By \eqref{eq:CombinatorialIdentity} we know that
  \[
	\mathrm{Law}\left(X_{(n)}^{(n)},\dots, X_{(1)}^{(n)} \right) %
	=\mathrm{Law} \left(Y_n^{(n)},\dots,Y_1^{(n)} \Bigm\vert Y^{(n)}_n \leq
      \dots \leq Y^{(n)}_1 \right)
  \]
  where $Y_n^{(n)},\dots,Y_1^{(n)}$ are independent random variables and
  $Y_i^{(n)}$ has a density proportional to
  \[
	y \in \mathbb R %
	\mapsto %
	\exp \Bigr( %
	-\beta \Bigr[\frac{2k-n-1}{2}y - \alpha U_{\rho}(y) \Bigr] \Bigr).
  \]
  Then, we have that
  \begin{align*}
	\mathrm{Law}
	&\left(X_{(n)}^{(n)},\dots, X_{(1)}^{(n)} \Bigm\vert N_n = k \right)\\
	& \hspace{15mm}  =\mathrm{Law}
	\left(Y_n^{(n)},\dots,Y_1^{(n)}
   \Bigm\vert
   Y^{(n)}_{n}
   \leq \dots
   \leq
   Y^{(n)}_{k+1}
   \leq
   0
   <
   Y^{(n)}_k \leq \dots \leq Y^{(n)}_1 \right).
  \end{align*}
  Then, by the independence of $\left(Y_n^{(n)},\dots,Y_{k+1}^{(n)}\right)$ and
  $\left(Y_k^{(n)},\dots,Y_{1}^{(n)}\right)$, we have that
  \[
	\mathrm{Law} \left(X_{(k)}^{(n)},\dots, X_{(1)}^{(n)} \Bigm\vert N_n = k
	\right) =\mathrm{Law} \left(Y_k^{(n)},\dots,Y_1^{(n)} \Bigm\vert 0 <
      Y^{(n)}_k \leq \dots \leq Y^{(n)}_1 \right).
  \]
  By \eqref{eq:PotentialExplicitFormula} we can see that
  \[
	U_\rho(y)= -\frac{y}{2} + \frac{1}{2} \int_{\mathbb R} s \, \dd \rho(s).
  \]
  for $y > 0$ so that
  \[
	\mathrm{Law} \left(Y_k^{(n)},\dots,Y_1^{(n)} \Bigm\vert 0 < Y^{(n)}_k \leq
      \dots \leq Y^{(n)}_1 \right) = \mathrm{Law} \left(Y_k,\dots,Y_1 \mid Y_k
      \leq \dots \leq Y_1 \right)
  \]
  where $Y_i$ follows the law of $Y_i^{(n)}$ conditioned to be positive which
  has a density proportional to
  \[
	y \in (0,\infty) \mapsto e^{-\frac{\beta}{2}(\alpha-n-1+2i) y}.
  \]
  Then $\mathrm{Law}\bigr(X_{(k)}^{(n)},\dots, X_{(1)}^{(n)}\mid N_n=k\bigr)$ has a
  joint density proportional to
  \[
	(x_k,\dots,x_1) 
	\mapsto \exp \Bigr[-\frac{\beta}{2}
	\sum_{j=1}^k
	\left(\alpha - n +1 + 2j
	\right)x_j
	\Bigr]\mathbf{1}_{0 \leq x_k  \leq \dots \leq x_1}.
  \]
  We can perform the change of variables
  \[
	z_i = x_i - x_{i+1}\text{ if } 
	i \in \{1,\dots, k-1\}
	\quad \text{ and } \quad 
	z_k = x_k 
  \]
  or equivalently $x_j = \sum_{i=j}^k z_i$ for any $i \in \{1,\dots, k\}$ to
  obtain a density proportional to
  \[
	(z_k,\dots,z_1) \mapsto \prod_{i=1}^k
	\exp\left[-\frac{\beta}{2} (
      \alpha - n + i)
      i\,  z_i\right]\mathbf{1}_{z_i \geq 0}
  \]
  which implies the second assertion of the proposition.
\end{proof}

\subsection*{Acknowledgments.}{\small\ DGZ was supported by the French
  ANR-16-CE40-0024 SAMARA project. PJ was funded in part by the National
  Research Foundation of Korea (NRF) grants NRF-2017R1A2B2001952 and
  NRF-2019R1A5A1028324. PJ (respectively DC) thanks the hospitality of
  Université Paris-Dauphine -- PSL (respectively KAIST). Also, all authors
  thank the hospitality of CIRM at Luminy.}

\bibliographystyle{smfalpha}
\bibliography{jellium1d}

\end{document}